\newtheorem{teo}{Theorem}
\newtheorem{cor}{Corollary}
\newtheorem{lema}{Lemma}
\newtheorem{prop}{Proposition}
\newtheorem{definition}{Definition}
\newtheorem*{definition*}{Definition}
\newtheorem*{thm*}{Theorem}
\newtheorem*{proposition*}{Proposition}
\newcommand{\R}{\mathbb{R}}
\newcommand{\N}{\mathbb{N}}
\def\ds{\displaystyle}
\begin{document}

\title{On the definition of likelihood function}

\author{\bf{F. B. Gon\c{c}alves$^{a}$, P. Franklin$^{b}$}}
\date{}

\maketitle

\begin{center}
{\footnotesize $^a$ Universidade Federal de Minas Gerais, Brazil\\
$^b$ Universidade Federal de Uberl\^{a}ndia}
\end{center}

\begin{abstract}

We discuss a general definition of likelihood function in terms of Radon-Nikod\'{y}m derivatives.
The definition is validated by the Likelihood Principle once we establish a result regarding the proportionality of likelihood functions under different dominating measures.
This general framework is particularly useful when there exists no or more than one obvious choice for a dominating measure as in some infinite-dimensional models.
We discuss the importance of considering continuous versions of densities and how these are related to the Likelihood Principle and the basic concept of likelihood. We also discuss the use of the predictive measure as a dominating measure in the Bayesian approach.
Finally, some examples illustrate the general definition of likelihood function and the importance of choosing particular dominating measures in some cases.

{\it Key Words}: Statistical model, Likelihood Principle, dominating measure, Radon-Nikod\'{y}m derivative, proportional likelihood, continuous densities.

\end{abstract}

\section{Introduction}

In this paper, we shall discuss some mathematical foundations of Likelihood Theory, more specifically, the definition of likelihood function. Likelihood-based methodologies are undoubtedly the most common and often most efficient ones to perform statistical inference - in particular, maximum-likelihood estimation and Bayesian inference. This is due to general strong properties of the likelihood function that stem from a solid mathematical foundation, based on measure/probability theory.

The concept of likelihood goes back to Fisher with the actual term first appearance in \cite{Fisher21}, so before Kolmogorov's probability axioms \citep{komol33} and the Radon-Nikod\'{y}m Theorem \citep{RNT}. Nevertheless, the intuition given by Fisher to construct the concept of likelihood made it straightforward to extend the definition of likelihood function (LF) in terms of Radon-Nikod\'{y}m derivatives. The earliest explicit version of such definition we could find is from \cite{Lind53} [Definition 2.4], however, it is implicitly assumed for example in \cite{HS49}. It consists in defining the likelihood function as any Radon-Nikod\'{y}m (RN) derivative (see Definition \ref{LFd} in Section \ref{secLPT}), i.e. using any $\sigma$-finite dominating measure.

Since any model that has a dominating measure admits an uncountable number of dominating measures, the aforementioned definition of likelihood function can only be admissible if the choice of dominating measure has no influence in the inference process. Under the Likelihood Principle (LP), it means that any two distinct dominating measures should lead to proportional likelihood functions. Although such a result is accepted by the statistical community, it has not yet been properly stated, proven or explored. This is one of the specific aims of this paper. In fact, this issue has never been properly raised in the literature. The general definition of likelihood is always approached by assuming the existence of a common dominating measure and there is no mention of other measures or what would be the implications of a making a different choice. \citet{Reid} mentions that ``Some books describe the likelihood function as the Radon-Nikod\'{y}m derivative of the probability measure with respect to a dominating measure. Sometimes the dominating measure is taken to be $P_{\theta_0}$ for a fixed value $\theta_0\in\Theta$. When we consider probability spaces and/or parameter spaces that are infinite dimensional, it is not obvious what to use as a dominating measure."

We state and prove what we call the Likelihood Proportionality Theorem, which validates (in terms of the LP) the general definition of likelihood function in terms of Radon-Nikod\'{y}m derivatives. Moreover, we discuss how continuous RN derivatives are relevant when obtaining the likelihood function. More specifically, we present some results showing that the continuity property guarantees the proportionality result and leads to likelihood functions that carry the intuitive concept of likelihood. We also discuss the use of the prior predictive measure as a dominating measure in a Bayesian context.

Finally, we discuss and provide several examples where the choice of the dominating measure requires special attention. In particular, situations that require some effort to find a valid dominating measure to obtain a valid likelihood function and situations in which more than one obvious dominating measure is available but a particular choice may significantly easy the inference process. We also emphasise that we work with Likelihood Theory in a general context and not just for parametric models. This context is considered in several relevant inference problems nowadays (specially infinite-dimensional problems under the Bayesian approach), as we illustrate in some of the examples provided.

We discuss four general classes of widely used models. The first example considers general finite-dimensional models and describes how to obtain a valid likelihood function when dealing with point-mass mixtures. The second example discusses some results regarding dominating measures for the exponential family. The third example explores possibly important implications of the choice of the dominating measure in general missing data problems. Finally, the last two examples consider classes of infinite-dimensional models: Poisson processes and diffusion processes.

Other works in the context of mathematical aspects of the likelihood function but that pursue different directions can be found in \citet{Nielsen}, \citet{Naderi96}, \citet{Naderi97} and \citet{Naderi07}.

This paper is organised as follows: Section 2 presents the Likelihood Proportionality Theorem and some important auxiliary results; Section 3 discusses the importance of continuous RN derivatives and Section 4 discusses the use of the predictive measure as a choice for dominating measure; Section 5 presents some examples regarding the choice of dominating measure and Section 6 brings final remarks.

\section{The Likelihood Proportionality Theorem}\label{secLPT}

Let $(\Omega, \mathcal{F})$ denote a measurable space, $(\Omega,\mathcal{F},\mu)$ a measure space and $M(\Omega,\mathcal{F})$ the collection of all measurable functions $f:\Omega\longrightarrow\R$.

\begin{definition*}[Statistical model]
A statistical model is a family of probability measures $\mathcal{P}$ on $(\Omega,\mathcal{F})$, i.e $\mathcal{P}=\{P_{\theta};\ \theta\in\Theta\}$, where the $P_{\theta}$'s are probability measures and $\Theta$ is an arbitrary index set. In the particular case where $\Theta\subset\mathds{R}^d$ for $d\in\mathds{N}$, $\mathcal{P}$ is called a parametric model, $\theta$ a parameter and $\Theta$ the parameter space. In any other case $\mathcal{P}$ is called a non-parametric model.
\end{definition*}

An statistical model is called identifiable if mapping from $\Theta$ to $\mathcal{P}$ is a bijection. This is a highly desirable property to perform statistical inference. In particular, it is one of the regularity conditions required in the most important results regarding maximum likelihood estimation.

A statistical inference problem can be generally described as follows. Given a model $\mathcal{P}$, one wants to estimate a probability measure $P_{\theta^*}\in\mathcal{P}$ (often referred to as population) based on a sample (realisation(s) from $P_{\theta^*}$ - a random experiment). The likelihood function (of $P_{\theta}$) is one way to quantify the likelihood of each $P_{\theta}$ having generated the data. We formally define the likelihood function as follows.

\begin{definition}[Likelihood function]\label{LFd}
Let $\mathcal{P}=\{P_{\theta};\ \theta\in\Theta\}$ be a statistical model and $\nu$ any $\sigma$-finite measure such that $\mathcal{P}<<\nu$. For a given observed sample point $\omega$, the likelihood function $l_{\nu}(\theta;\omega)$ for $P_{\theta}\in\mathcal{P}$ is given by a version of the Radon-Nikod\'{y}m derivative $\ds \frac{dP_{\theta}}{d\nu}(\omega)$, for all $\theta\in\Theta$.
\end{definition}

As we have mentioned before, it is reasonable to extend the intuition developed by Fisher to construct the concept of likelihood to the definition above. However, a formal validation of this definition is to be achieved through the Likelihood Principle and the Likelihood Proportionality Theorem. The LP specifies how the likelihood function ought to be used for data reduction - a detailed addressing of the LP can be found in \cite{BE&WO}.\\

\noindent\textbf{The Likelihood Principle.} \cite{BE&WO} [page 19] \emph{``All the information about $P_{\theta}$ obtainable from an experiment is contained in the likelihood function for $P_{\theta}$ given the sample. Two likelihood functions for $P_{\theta}$ contain the same information about $P_{\theta}$ if they are proportional to one another."}\\

The proportionality mentioned in the LP means that $\ds l_1(\theta;\omega)=h(\omega)l_2(\theta;\omega)$, with $l_1$ and $l_2$ being the two likelihood functions. Furthermore, the fact that both likelihood functions contain the same information about $P_{\theta}$ imply that the same inference must be done. The version of the LP stated above is a general version and may be contextualised in different cases, leading to more specific versions. For example, the two likelihood functions may refer to two different data points $\omega_1$ and $\omega_2$, such that $l(\theta;\omega_1)\propto l(\theta,\omega_2)$, or even different experiments which, in our construction, could be characterised as the sample consisting of observing different functions $f\in M(\Omega,\mathcal{F})$, but also leading to proportional likelihood functions. In this work, however, we consider the LP under the perspective of different dominating measures used to obtain the likelihood function, leading to the following version.\\

\noindent\textbf{The Likelihood Principle} (for different dominating measures). \emph{Let $\nu_1$ and $\nu_2$ be two dominating measures for $P_{\theta}$, for all $\theta\in\Theta$, leading to likelihood functions $l_{\nu_1}$ and $l_{\nu_2}$, such that $l_{\nu_1}(\theta;\omega)\propto l_{\nu_2}(\theta;\omega)$, for all $\theta\in\Theta$ and for all $\omega\in\Omega_0$ such that $P_{\theta}(\Omega_0)=1$ for all $\theta\in\Theta$. Then, $l_{\nu_1}$ and $l_{\nu_2}$ contain the same information about $P_{\theta}$.}\\

This way, Definition \ref{LFd} is validated by the LP if different dominating measures lead to proportional likelihood functions with probability 1 under all $P_{\theta}$. Such a result is stated in detail in the Likelihood Proportionality Theorem further ahead in this section.

Before stating and proving the theorem, we need some auxiliary results. The first one is a neat result from \cite{HS49} (Lemma 7) considering dominated families of measures.

\begin{lema}\label{lema1}
(Halmos and Savage, 1949) Let $\mathcal{P}=\{P_{\theta};\ \theta\in\Theta\}$ be a family of probability measures and $\nu$ a $\sigma$-finite measure on $(\Omega,\mathcal{F})$. If $\mathcal{P}<<\nu$ then there exists a probability measure $Q$, such that $\mathcal{P}<<Q$ and $Q=\sum_{i=1}^{\infty} c_{i}P_{\theta_i}$, where the $c_{i}$'s are non-negative constants with $\sum_{i=1}^{\infty} c_{i}=1$ and $P_{\theta_i}\in \mathcal{P}$.
\end{lema}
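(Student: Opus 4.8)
The plan is to prove this by a standard ``dominated family'' construction that reduces the arbitrary dominating measure $\nu$ to a countable convex combination of members of $\mathcal{P}$ itself. The key idea is that the family $\mathcal{P}$ admits a maximal countable subfamily whose combination dominates everything else, and the $\sigma$-finiteness of $\nu$ is what lets us extract such a countable subfamily.

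First I would reduce to the case where $\nu$ is a probability measure. Since $\nu$ is $\sigma$-finite, there is a partition $\Omega=\bigcup_{n} A_n$ into sets of finite positive $\nu$-measure, and one can define an equivalent probability measure $\tilde\nu$ with $\frac{d\tilde\nu}{d\nu}$ strictly positive (for instance, $\tilde\nu(B)=\sum_n 2^{-n}\nu(B\cap A_n)/\nu(A_n)$). Because $\tilde\nu$ and $\nu$ are mutually absolutely continuous, $\mathcal{P}<<\nu$ is equivalent to $\mathcal{P}<<\tilde\nu$, so without loss of generality I may assume $\nu$ is a probability measure with $\mathcal{P}<<\nu$.

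Next I would consider the collection of all countable convex combinations $Q=\sum_i c_i P_{\theta_i}$ (with $c_i\ge 0$, $\sum_i c_i=1$, $P_{\theta_i}\in\mathcal{P}$), each of which is itself a probability measure and satisfies $Q<<\nu$. For each such $Q$ let $N_Q=\{d Q/d\nu=0\}$ be the $\nu$-null-carrier set where its density vanishes, and consider $\sup_Q \nu(\,\Omega\setminus N_Q\,)$, i.e. maximise the $\nu$-measure of the support of the density. This supremum is attained by some $Q_0=\sum_i c_i P_{\theta_i}$: take a sequence $Q_n$ approaching the supremum and combine them into a single countable combination $Q_0$ with positive weights, whose support contains the support of each $Q_n$, so that $Q_0$ achieves the supremum. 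I would then set $Q:=Q_0$ and claim $\mathcal{P}<<Q$.

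The main obstacle, and the crux of the argument, is verifying this final claim: that maximality of the support of $dQ/d\nu$ forces $\mathcal{P}<<Q$. Suppose not; then some $P_\theta$ is not dominated by $Q$, meaning there is a set $B$ with $Q(B)=0$ but $P_\theta(B)>0$. On $B$ the density $dQ/d\nu$ vanishes $\nu$-a.e., yet since $P_\theta<<\nu$ and $P_\theta(B)>0$ we have $\nu(B\cap\{dP_\theta/d\nu>0\})>0$. Forming the new combination $Q'=\frac12 Q+\frac12 P_\theta$ then yields a density that is positive on a strictly larger $\nu$-set (it picks up positive mass on part of $B$ where $dQ/d\nu=0$), contradicting the maximality of $\nu(\Omega\setminus N_Q)$. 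This contradiction establishes $\mathcal{P}<<Q$, completing the proof. The delicate point is ensuring that the countable combination $Q_0$ genuinely attains the supremum — the simpler-than-original proof alluded to in the paper presumably streamlines exactly this step by building $Q_0$ directly as a single weighted sum of a maximising sequence rather than invoking a more elaborate exhaustion argument.
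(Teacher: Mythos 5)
Your proposal is correct and follows essentially the same argument as the paper's proof: reduce to a finite (probability) dominating measure via $\sigma$-finiteness, maximise the $\nu$-measure of the positivity set of $dQ/d\nu$ over countable convex combinations $Q$, attain the supremum with a $\sum_i 2^{-i}Q_i$ combination of a maximising sequence, and derive a contradiction by mixing in any $P_\theta$ that fails to be dominated. The only cosmetic difference is that the paper phrases the maximisation over a class $\mathcal{C}$ of sets carrying a.e.\ positive densities rather than directly over the combinations themselves, and performs the finite-measure reduction at the end instead of the start.
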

\begin{proof}
\citep[See][page 53]{Jorgensen12}.
\end{proof}

Lemma \ref{lema1} provides a strong property for families of probability measures that are dominated by a common $\sigma$-finite measure. The result establishes the existence of a countable coverage for that family. The key feature in that property is the fact that $P_{\theta_i}$-almost surely, for $i=1,2,\ldots$, implies in $P_{\theta}$-almost surely, for all $\theta\in\Theta$, and this is crucial to establish the main result in this paper - the Likelihood Proportionality Theorem.

\begin{definition}\label{defmdm}
For a family of probability measures $\mathcal{P}=\{P_\theta;\ \theta\in\Theta\}$, suppose that the family $\Upsilon=\{\nu;\ \mathcal{P}<<\nu\}$ is non-empty. If there exists $\lambda\in\Upsilon$ such that $\lambda<<\nu$ for all $\nu\in\Upsilon$, then we say that $\lambda$ is a minimal dominating measure for the family $\mathcal{P}$.
\end{definition}
Note that a minimal dominating measure is not necessarily unique. However, by definition, two minimal dominating measures are always equivalent.

\begin{prop}\label{propmdm}
Let $\mathcal{P}=\{P_\theta;\ \theta\in\Theta\}$ be a family of probability measures defined on the measurable space $(\Omega,\mathcal{F})$. Suppose that the family $\Upsilon=\{\nu;\ \mathcal{P}<<\nu\}$ is non-empty. Then, there exists a minimal dominating measure $\lambda$ for $\mathcal{P}$.
\end{prop}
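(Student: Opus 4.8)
The plan is to exhibit an explicit minimal dominating measure, and the candidate is handed to us directly by Lemma \ref{lema1}. Since $\Upsilon$ is non-empty by hypothesis, fix any $\nu_0\in\Upsilon$, so that $\mathcal{P}<<\nu_0$ with $\nu_0$ $\sigma$-finite. Lemma \ref{lema1} then produces a probability measure $Q=\sum_{i=1}^{\infty}c_iP_{\theta_i}$, with $P_{\theta_i}\in\mathcal{P}$ and the $c_i$ non-negative constants summing to one, such that $\mathcal{P}<<Q$. I claim that $\lambda:=Q$ is a minimal dominating measure for $\mathcal{P}$.

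First I would check that $\lambda\in\Upsilon$. This is immediate: $Q$ is a probability measure, hence finite and therefore $\sigma$-finite, and $\mathcal{P}<<Q$ by the conclusion of Lemma \ref{lema1}. So $Q$ is a genuine dominating measure for the family.

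The substantive step is to verify minimality, i.e.\ that $\lambda<<\nu$ for every $\nu\in\Upsilon$. Fix an arbitrary $\nu\in\Upsilon$ and let $A\in\mathcal{F}$ with $\nu(A)=0$. Because $\mathcal{P}<<\nu$, in particular $P_{\theta_i}<<\nu$ for every index $i$ appearing in the representation of $Q$, so $P_{\theta_i}(A)=0$ for all $i$. Countable additivity of the sum then gives $\lambda(A)=Q(A)=\sum_{i=1}^{\infty}c_iP_{\theta_i}(A)=0$. Hence every $\nu$-null set is $\lambda$-null, which is exactly $\lambda<<\nu$. Combined with $\lambda\in\Upsilon$, this shows $\lambda$ satisfies Definition \ref{defmdm}.

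The whole argument is short precisely because the difficulty has been absorbed into Lemma \ref{lema1}: the hard part is producing a dominating measure that is a \emph{countable} convex combination of elements of $\mathcal{P}$, and the null-set argument above is the one line that converts that structural fact into minimality. The key conceptual point to record is that any measure of the form $\sum_i c_iP_{\theta_i}$ can only charge sets that some $P_{\theta_i}$ charges, so it is automatically absolutely continuous with respect to \emph{any} measure dominating the family; this is what forces $Q$ to sit at the bottom of $\Upsilon$ in the absolute-continuity order.
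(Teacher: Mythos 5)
Your proof is correct and is essentially identical to the paper's: both take the countable convex combination $Q=\sum_i c_iP_{\theta_i}$ supplied by Lemma \ref{lema1} as the candidate $\lambda$, and both verify minimality by the same null-set argument ($\nu(A)=0$ forces $P_{\theta_i}(A)=0$ for every $i$, hence $\lambda(A)=0$). Nothing to add.
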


\begin{proof}
See Appendix.
\end{proof}

For a function $f$ in $M(\Omega,\mathcal{F})$, define $[f]_\mu$ as the equivalence class of $f$ with respect to $\mu$, i.e. the collection of all functions $g$ in $M(\Omega,\mathcal{F})$ such that $g=f$ $\mu$-a.s. We now state and prove the Likelihood Proportionality Theorem.

\begin{teo}[The Likelihood Proportionality Theorem]\label{main.theo}
Let $\mathcal{P}=\{P_{\theta};\ \theta\in\Theta\}$ be a family of probability measures and $\nu_1,\nu_2$ $\sigma$-finite measures on $(\Omega,\mathcal{F})$. Suppose that $\mathcal{P}<<\nu_1$, $\mathcal{P}<<\nu_2$ and that $\nu$ is a minimal dominating measure for $\mathcal{P}$. Then, there exists a measurable set $A$ such that $P_\theta(A)=1$, for all $\theta\in\Theta$, and there exist $f_{1,\theta}\in [\frac{dP_{\theta}}{d\nu_1}]_{\nu}$, $f_{2,\theta}\in [\frac{dP_{\theta}}{d\nu_2}]_{\nu}$, for all $\theta\in\Theta$, and a measurable function $h$ such that
\begin{equation}\label{main.eq}
f_{1,\theta}(\omega)=h(\omega)f_{2,\theta}(\omega),\ \forall\theta\in\Theta,\ \forall\omega\in A.
\end{equation}
\end{teo}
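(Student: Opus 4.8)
The plan is to use the minimal dominating measure $\nu$ as a common reference point that connects $\nu_1$ and $\nu_2$ via the chain rule for Radon--Nikod\'{y}m derivatives, and then to use Lemma \ref{lema1} to upgrade almost-everywhere statements to a single exceptional null set that works simultaneously for all $\theta$. First I would record the basic domination facts: since $\nu$ is minimal, $\nu<<\nu_1$ and $\nu<<\nu_2$, and since $\mathcal{P}<<\nu_i$ we also have, for each $\theta$, that $P_\theta<<\nu_i$ and $P_\theta<<\nu$. The natural candidate for the proportionality factor is the likelihood-ratio-type quantity $h=\dfrac{d\nu_2}{d\nu_1}$, which exists as a $\nu_1$-a.s. defined derivative because $\nu<<\nu_1$ forces $\nu_2$ and $\nu_1$ to be comparable on the relevant sets; more carefully, I would obtain $h$ as a version of $\dfrac{d\nu_2}{d\nu_1}$ on the set where the model lives.

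Next I would invoke the chain rule. For each fixed $\theta$, the standard Radon--Nikod\'{y}m chain rule gives
\begin{equation}\label{chain.plan}
\frac{dP_\theta}{d\nu_1}=\frac{dP_\theta}{d\nu_2}\,\frac{d\nu_2}{d\nu_1}\quad \nu_1\text{-a.s.}
\end{equation}
Choosing $f_{2,\theta}$ to be a fixed version of $\dfrac{dP_\theta}{d\nu_2}$ and setting $f_{1,\theta}=h\,f_{2,\theta}$, equation \eqref{chain.plan} says that $f_{1,\theta}$ is indeed a version of $\dfrac{dP_\theta}{d\nu_1}$, so both functions sit in the required equivalence classes $[\tfrac{dP_\theta}{d\nu_1}]_\nu$ and $[\tfrac{dP_\theta}{d\nu_2}]_\nu$ (noting that $\nu$-a.s.\ equality is what defines these classes and that $\nu<<\nu_1,\nu_2$ lets me pass between the reference measures). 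The content of \eqref{main.eq} for a single $\theta$ is then immediate; the real work is removing the $\theta$-dependence of the exceptional null set.

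This is where Lemma \ref{lema1} is essential, and I expect it to be the main obstacle. The chain rule \eqref{chain.plan} holds only outside a $\nu_1$-null set $N_\theta$ that a priori depends on $\theta$, and since $\Theta$ is an arbitrary (possibly uncountable) index set, I cannot simply take a union over $\theta$. The plan is to apply Lemma \ref{lema1} to produce a dominating probability measure $Q=\sum_i c_i P_{\theta_i}$ with $\mathcal{P}<<Q$, built from a countable subfamily $\{P_{\theta_i}\}$. I would first establish \eqref{chain.plan}, and hence \eqref{main.eq}, on the complement of a single null set for each of the countably many $\theta_i$; taking the countable union of these gives one set $N$ with $Q(N)=0$, equivalently $P_{\theta_i}(N)=0$ for all $i$. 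The crucial step flagged in the discussion after Lemma \ref{lema1} is that $P_{\theta_i}(N)=0$ for all $i$ forces $P_\theta(N)=0$ for \emph{all} $\theta\in\Theta$, because $\mathcal{P}<<Q$. Setting $A=\Omega\setminus N$ then yields $P_\theta(A)=1$ for every $\theta$.

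Finally I would check that the single identity $f_{1,\theta}=h f_{2,\theta}$ extends from the countable family to all of $\Theta$ on this common set $A$. Since I defined $f_{1,\theta}:=h f_{2,\theta}$ as functions (not merely as equivalence classes) using the \emph{fixed} version $h$ and the chosen versions $f_{2,\theta}$, the identity \eqref{main.eq} holds pointwise on $A$ essentially by construction; the remaining point is to confirm that this pointwise-defined $f_{1,\theta}$ genuinely represents $\tfrac{dP_\theta}{d\nu_1}$ up to $\nu$-null sets for every $\theta$, which again follows from \eqref{chain.plan} holding $P_\theta$-a.s.\ on $A$ together with the equivalence of $\nu$ with the restrictions of $\nu_1,\nu_2$ where the model is supported. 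I would close by remarking that minimality of $\nu$ is what guarantees $\nu<<\nu_1$ and $\nu<<\nu_2$, so that $\nu$-equivalence classes are the correct common bookkeeping device tying the two derivatives together.
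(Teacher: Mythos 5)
Your overall architecture is the same as the paper's: route everything through the minimal dominating measure $\nu$, produce a single $\theta$-free transfer function $h$, define $f_{1,\theta}:=h\,f_{2,\theta}$ so that (\ref{main.eq}) holds pointwise by construction, and get a $\theta$-free full-measure set from domination. But there is a genuine gap at the central step: your $h$ is posited to be a version of $\frac{d\nu_2}{d\nu_1}$, and this object need not exist. Nothing in the hypotheses gives $\nu_2<<\nu_1$: take $\mathcal{P}=\{N(0,1)\}$ on $\R$, $\nu_1$ Lebesgue measure and $\nu_2$ Lebesgue measure plus a point mass at $0$; then $\nu_2(\{0\})>0=\nu_1(\{0\})$, so $\frac{d\nu_2}{d\nu_1}$ does not exist and your displayed chain rule is unavailable, even though $\nu<<\nu_1$ holds (here $\nu$ can be taken as $N(0,1)$ itself). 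Your hedge --- that $\nu<<\nu_1$ ``forces $\nu_2$ and $\nu_1$ to be comparable on the relevant sets'' and that you would take $h$ as a version of $\frac{d\nu_2}{d\nu_1}$ ``on the set where the model lives'' --- is not a justification but a restatement of exactly what must be proved: that there exists a measurable set $A$ with $P_\theta(A)=1$ for all $\theta$ on which the restrictions of $\nu_1$ and $\nu_2$ are equivalent. That claim is true, but it is a separate result in the paper (Lemma \ref{theo.2}) requiring its own argument, and it is precisely what your proposal omits. The paper's proof of Theorem \ref{main.theo} sidesteps the issue by using minimality in both directions: since $\nu<<\nu_1$ and $\nu<<\nu_2$, the derivatives $h_1\in[\frac{d\nu}{d\nu_1}]_{\nu}$ and $h_2\in[\frac{d\nu}{d\nu_2}]_{\nu}$ both exist globally; one sets $A=\{h_2>0\}$, so that $\nu(A^c)=\int_{A^c}h_2\,d\nu_2=0$ and hence $P_\theta(A)=1$ for all $\theta$ because $\mathcal{P}<<\nu$, and defines $h=h_1/h_2$ on $A$ (zero off $A$), together with $f_{1,\theta}=g_\theta h_1$ and $f_{2,\theta}=g_\theta h_2$ for a common $g_\theta\in[\frac{dP_\theta}{d\nu}]_{\nu}$. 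In other words, the correct surrogate for your $\frac{d\nu_2}{d\nu_1}$ is the ratio $h_1/h_2$; once $h$ is built this way, your scheme closes (one can check directly that $h f_{2,\theta}$ integrates to $P_\theta(B)$ against $\nu_1$ over any $B$).

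A secondary point: your middle paragraph invoking Lemma \ref{lema1} to ``remove the $\theta$-dependence of the exceptional null set'' is a detour that reflects some confusion. Once $f_{1,\theta}$ is \emph{defined} as $h f_{2,\theta}$, identity (\ref{main.eq}) holds on $A$ for every $\theta$ simultaneously --- there is no $\theta$-dependent exceptional set left to remove; the chain rule is needed only, separately for each fixed $\theta$, to certify that $h f_{2,\theta}$ is a genuine version of $\frac{dP_\theta}{d\nu_1}$, and no uniformity in $\theta$ is required for that. The only statement needing a $\theta$-free argument is $P_\theta(A)=1$ for all $\theta$, where $A$ is the domain of $h$, and this follows from $\nu(A^c)=0$ together with $\mathcal{P}<<\nu$. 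The Halmos--Savage mixture $Q$ of Lemma \ref{lema1} enters the paper's development in proving that a minimal dominating measure exists (Proposition \ref{propmdm}) and in proving Lemma \ref{theo.2}, not inside the proof of Theorem \ref{main.theo} itself.
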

\begin{proof}
See Appendix.
\end{proof}

\noindent\textbf{Discussion of Theorem \ref{main.theo}.} Note that equation (\ref{main.eq}) implies that $f_{1,\theta}(\omega)\propto_\theta f_{2,\theta}(\omega)$, $\forall\theta\in\Theta,\;\forall\omega\in A$, which validates Definition \ref{LFd} in terms of the Likelihood Principle i.e., independent of the choice of the dominating measure the inference will (a.s.) be the same. Furthermore, the proportionality result is valid a.s. $P_{\theta}$, for all $\theta\in\Theta$, in particular, for the true $\theta$ (whichever it is).

Note, however, that Theorem \ref{main.theo} states the existence of versions of RN derivatives that satisfies (\ref{main.eq}), which means that not all versions necessarily do. In this sense, it would be useful to define a class of versions that always satisfies (\ref{main.eq}) and, possibly, lead to a well-behaved likelihood function, for example, that satisfies the classical regularity conditions (if such a version exists). We further explore this issue in Section \ref{seccontv}, considering continuous versions of RN derivatives.

In some cases, $[\frac{dP_\theta}{d\nu_1}]_{\nu}$ and $[\frac{dP_\theta}{d\nu_2}]_{\nu}$ are unitary sets. For example, in a family $\mathcal{P}=\{P_\theta;\ \theta\in\Theta\}$ of discrete distributions, i.e. $P_\theta(\omega)>0$, for all $\theta\in\Theta$ and for all $\omega\in\Omega$. In those cases, the unique version of the respective RN derivative is $[\frac{dP_\theta}{d\nu_i}]_{\nu}=\frac{P_{\theta}(\omega)}{\nu_i(\omega)}$, for all $\omega\in A$, with $A=\{\omega\in\Omega: \nu(\omega)>0\}$.
Another interesting particular example is the case where the family of probability measures is a countable set. In this case, any pair of versions of the RN derivative (one for each dominating measure) satisfies (\ref{main.eq}).

We also call the reader's attention to an important issue raised by one of the referees of this paper. Note that, in our context, the two likelihood functions in Theorem \ref{main.theo} are obtained from a common statistical model, therefore, with common sample space. This means that the version of the LP in terms of different dominating measures that, along with Theorem \ref{main.theo}, validates the definition of likelihood function in Definition \ref{LFd}, is a weak version of the LP. As a consequence, for example, not only the maximum likelihood estimator (MLE) will be the same under both dominating measures, but also its distribution, and therefore, all the inference based on this distribution. This also applies to any other frequentist estimator whose definition is independent of the dominating measure.

We can relate Theorem \ref{main.theo} to the Factorisation Theorem by stating the following Proposition.

\begin{prop}\label{T1FT}
Consider $\mathcal{P}$, $\nu_1$, $\nu_2$ and $\nu$ from Theorem \ref{main.theo} and $Q$ from Lemma \ref{lema1} and let $T$ be a sufficient statistic for $\mathcal{P}$ with range space $(\mathcal{T},\mathcal{B})$. Then:
\begin{enumerate}[i)]
  \item For each version $g_{\theta}^*\in[\frac{dP_{\theta}}{dQ}]_{\nu}$ in $(\Omega,\sigma(T))$ and $h_1\in[\frac{dQ}{d\nu_1}]_{\nu}$ in $(\Omega,\mathcal{F})$, there exists a $\mathcal{B}$-measurable function $g_{\theta}$ such that $g_{\theta}^*=g_{\theta}\circ T$ and the function $f_{1,\theta}=(g_{\theta}\circ T)h_1$ is a version in $[\frac{dP_{\theta}}{d\nu_1}]_{\nu}$, for all $\theta$.
  \item If we obtain $f_{1,\theta}$ and $f_{2,\theta}$ as in $i)$ (for $\nu_1$ and $\nu_2$, respectively) from the same $g_{\theta}^*$, then $f_{1,\theta}\propto f_{2,\theta}$ in a measurable set $A$, for all $\theta\in\Theta$, such that $\nu(A^c)=0$.
\end{enumerate}
\end{prop}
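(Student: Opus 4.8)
The plan is to derive both parts from the chain rule for Radon--Nikod\'{y}m derivatives together with the Doob--Dynkin factorisation lemma, after first fixing the absolute-continuity relations among the measures involved. The key preliminary observation is that the privileged measure $Q$ of Lemma \ref{lema1} is equivalent to the minimal dominating measure $\nu$: since $\mathcal{P}<<Q$ we have $Q\in\Upsilon$, so minimality gives $\nu<<Q$; conversely, writing $Q=\sum_i c_i P_{\theta_i}$ and using $P_{\theta_i}<<\nu$ for every $i$ gives $Q<<\nu$. Hence $Q$ and $\nu$ are equivalent, so ``$Q$-a.s.'' and ``$\nu$-a.s.'' coincide and the classes $[\frac{dP_\theta}{dQ}]_\nu$ are just the usual ($Q$-a.s.) classes of these derivatives. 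I would also record that $Q<<\nu_1$ and $Q<<\nu_2$ (again from $Q=\sum_i c_i P_{\theta_i}$ with $\mathcal{P}<<\nu_1,\nu_2$) and that $\nu<<\nu_1$, $\nu<<\nu_2$ by minimality, so that any relation valid $\nu_1$-a.s. (resp. $\nu_2$-a.s.) automatically holds $\nu$-a.s.

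For part $i)$, I would first factor $g_\theta^*$. As $T$ is sufficient for the dominated family, the Halmos--Savage characterisation of sufficiency guarantees that $\frac{dP_\theta}{dQ}$ admits a $\sigma(T)$-measurable version, which is the meaning of $g_\theta^*$ being taken in $(\Omega,\sigma(T))$; the Doob--Dynkin lemma then yields a $\mathcal{B}$-measurable $g_\theta$ with $g_\theta^*=g_\theta\circ T$. Next I would apply the change-of-measure formula: for every $E\in\mathcal{F}$, $\int_E (g_\theta\circ T)h_1\,d\nu_1=\int_E g_\theta^*\,dQ=P_\theta(E)$, the first equality because $h_1=\frac{dQ}{d\nu_1}$ and the second because $g_\theta^*=\frac{dP_\theta}{dQ}$ ($Q$-a.s.). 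Thus $f_{1,\theta}=(g_\theta\circ T)h_1$ is a version of $\frac{dP_\theta}{d\nu_1}$, $\nu_1$-a.s., hence $\nu$-a.s. by $\nu<<\nu_1$, i.e. $f_{1,\theta}\in[\frac{dP_\theta}{d\nu_1}]_\nu$, as required.

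For part $ii)$, I would take $f_{1,\theta}=(g_\theta\circ T)h_1$ and $f_{2,\theta}=(g_\theta\circ T)h_2$ built from the same $g_\theta^*$, with $h_1\in[\frac{dQ}{d\nu_1}]_\nu$ and $h_2\in[\frac{dQ}{d\nu_2}]_\nu$. Put $A=\{\omega:\,h_2(\omega)\neq 0\}$; then $Q(A^c)=\int_{A^c}h_2\,d\nu_2=0$, and $Q\equiv\nu$ gives $\nu(A^c)=0$. On $A$ define $h=h_1/h_2$, which is well defined and, crucially, does not depend on $\theta$. A direct cancellation, $h\,f_{2,\theta}=\frac{h_1}{h_2}(g_\theta\circ T)h_2=(g_\theta\circ T)h_1=f_{1,\theta}$, shows $f_{1,\theta}=h\,f_{2,\theta}$ on $A$ for every $\theta$, which is exactly $f_{1,\theta}\propto f_{2,\theta}$ (in $\theta$) on a set whose complement is $\nu$-null.

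The main obstacle I anticipate is not any single deep step but the careful bookkeeping of the several almost-sure notions so that every equivalence-class statement is consistent. The equivalence $Q\equiv\nu$ is what reconciles the intrinsically $Q$-a.s. derivatives $\frac{dP_\theta}{dQ}$ with the $\nu$-classes used throughout, while the inclusions $\nu<<\nu_1$ and $\nu<<\nu_2$ are what permit passing from the chain-rule identities (valid $\nu_i$-a.s.) to the desired $\nu$-a.s. conclusions; one should also note that the derivatives are $\nu_i$-a.s. finite, so that the ratio $h_1/h_2$ is meaningful on $A$. Once these relations are in place, the factorisation in $i)$ and the $\theta$-independence of $h=h_1/h_2$ in $ii)$ follow by routine manipulation.
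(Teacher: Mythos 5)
Your proposal is correct and follows essentially the same route as the paper's own proof: Doob--Dynkin factorisation of $g_\theta^*$, the sufficiency of $T$ (via the Halmos--Savage/Lehmann characterisation) to promote the $\sigma(T)$-version of $\frac{dP_\theta}{dQ}$ to a version on $(\Omega,\mathcal{F})$, the RN chain rule for part $i)$, and cancellation of the common factor $g_\theta\circ T$ on a set of full $\nu$-measure for part $ii)$. The only differences are cosmetic: you take $A=\{h_2\neq 0\}$ where the paper takes $A=\{h_1>0\}$, and you make explicit the equivalence $Q\equiv\nu$ and the inclusions $\nu<<\nu_1$, $\nu<<\nu_2$, which the paper leaves implicit.
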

\begin{proof}
See Appendix.
\end{proof}

Part $i)$ from Proposition \ref{T1FT} can be seen as a stronger version of the Factorisation Theorem as it states that the density representation is valid for all $\theta$ in the whole $\Omega$, i.e. it holds $P_{\theta}$ a.s., for all $\theta$. The classical version of the Factorisation Theorem is a consequence, since all versions in $[\frac{dP_{\theta}}{d\nu_1}]_{\nu_1}$ are $P_{\theta}$ equivalent.

Finally, note that the result in Theorem \ref{main.theo} is valid for any topological structure induced in the sample space $\Omega$, in particular, if $\Omega$ is non-separable and/or non-metric.

\section{Continuous versions of Radon-Nikod\'{y}m derivatives}\label{seccontv}

As we have mentioned before, we would like to define a subclass of RN versions that would always satisfy the proportionality relation (\ref{main.eq}) and, therefore, provide a practical way to obtain a likelihood function. That is achieved by considering continuous versions of densities. We state two results (Theorem \ref{CVT} and Proposition \ref{TCVLF2}) that, under different assumptions, guarantee that continuous versions of the RN derivatives, when these exist, do satisfy (\ref{main.eq}). In fact, in \cite{Picc82} and \cite{Picc83}, the likelihood function is defined as a continuous version of the RN derivative. The author proves that, if such a version exists, it is unique (under some additional assumptions) and this particular definition is justified by the fact that such a version is related to a limit that builds on the intuition of likelihood. \citet{BE&WO} suggest the use of continuous versions in face of the ambiguity implied by the existence of different versions of RN derivatives. Their choice is justified as follows: ``By restricting our attention to ($\nu$-almost everywhere) continuous densities, continuous sufficient statistics, etc. we could develop versions of the conditionality, sufficiency, and likelihood principles very similar to those in the discrete setting." Finally, regarding well-behaved versions of the likelihood function, continuity (of the likelihood) is a particular property of interest. In particular, most of the important results regarding properties of the MLE rely on assumptions that include continuity. In some cases (especially for parametric models), continuity (in $\theta$) of the likelihood is implied by continuity (in $\omega$) of the RN density.

For the whole of this section, let $\Omega$ be a metric separable space with a distance that induces the topology $\textbf{A}$. As usual, $\mathcal{F}$ is the smallest $\sigma$-algebra containing $\textbf{A}$ - the Borel $\sigma$-algebra of $\Omega$.

We now discuss why continuous versions of densities lead to likelihood functions that carry the true intuition of likelihood. In the simplest case where $\Omega$ is discrete, the likelihood is proportional to the probability of the observed sample $\omega_0$, which gives a clear interpretation to the concept of likelihood. This concept is extended to the continuous case by considering the following limit:
\begin{equation}\label{ExLim}
\lim_{n\rightarrow\infty}\frac{P_{\theta}(A_n)}{\nu(A_n)},
\end{equation}
for all sequence $A_1,A_2,\ldots$ such that $A_n\in\mathfrak{A}(\omega_0)$ and $A_1\subset A_2\subset \ldots$, where $\mathfrak{A}$ is the collection of open neighbourhoods of $\omega_0$. \cite{Picc82} shows that there exists a continuous version $f_{\theta}^c$ of $dP_{\theta}/d\nu$ if and only if there exists the limit in (\ref{ExLim}), in which case $f_{\theta}^c(\omega_0)$ is exactly this limit.

It is natural to expect that continuous versions will satisfy the proportionality relation (\ref{main.eq}). This is established in Theorem \ref{CVT} and Proposition \ref{TCVLF2} below. In order to prove these two results, we require the following Lemma and definitions (which are valid for general sample spaces $\Omega$).

\begin{definition}
Let $(\Omega,\mathcal{F},\mu)$ be a measure space and $A\in\mathcal{F}$ a nonempty set. We denote $\mu\big|_A$ as the restriction of the measure $\mu$ on $(A,\mathcal{F}(A))$, i.e., $\mu\big|_A$ is the measure defined on $(A,\mathcal{F}(A))$ such that $\mu\big|_A(B)=\mu(B),\ \forall B\in\mathcal{F}(A)$.
\end{definition}

\begin{lema}\label{theo.2}
Let $\mathcal{P}=\{P_{\theta};\ \theta\in\Theta\}$ be a family of probability measures and $\nu_1$ and $\nu_2$ $\sigma$-finite measures on $(\Omega,\mathcal{F})$, where $\Theta$ is a nonempty set. Suppose that $\mathcal{P}<<\nu_1$ and $\mathcal{P}<<\nu_2$. Then, there exists a measurable set $A$ such that
\begin{enumerate}[(i)]
    \item $P_\theta(A)=1$, for all $\theta\in\Theta$ and
    \item $\nu_1\big|_A$ and $\nu_2\big|_A$ are equivalent measures, that is, $\nu_1\big|_A<<\nu_2\big|_A$ and $\nu_2\big|_A<<\nu_1\big|_A$.
\end{enumerate}
\end{lema}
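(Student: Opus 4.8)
The plan is to use the Halmos--Savage reduction (Lemma \ref{lema1}) to collapse the entire, possibly uncountable, family $\mathcal{P}$ into a single dominating probability measure, and then to compare $\nu_1$ and $\nu_2$ through its Radon--Nikod\'ym densities. First I would apply Lemma \ref{lema1} (with, say, $\nu_1$ as the $\sigma$-finite dominating measure) to obtain a probability measure $Q=\sum_{i=1}^{\infty}c_i P_{\theta_i}$ with $\mathcal{P}<<Q$. Since each $P_{\theta_i}<<\nu_1$ and each $P_{\theta_i}<<\nu_2$, and $Q$ is a countable convex combination of the $P_{\theta_i}$, it follows at once that $Q<<\nu_1$ and $Q<<\nu_2$: a $\nu_j$-null set is $P_{\theta_i}$-null for every $i$, hence $Q$-null.

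Because $\nu_1$ and $\nu_2$ are $\sigma$-finite and $Q$ is dominated by each, the Radon--Nikod\'ym Theorem yields nonnegative densities $g_1=dQ/d\nu_1$ and $g_2=dQ/d\nu_2$. I would then take
\begin{equation*}
A=\{\omega\in\Omega:\ g_1(\omega)>0\}\cap\{\omega\in\Omega:\ g_2(\omega)>0\}.
\end{equation*}
To verify (i), note that $Q(\{g_1=0\})=\int_{\{g_1=0\}}g_1\,d\nu_1=0$ and likewise $Q(\{g_2=0\})=0$, so $Q(A^c)=0$ and thus $Q(A)=1$; since $\mathcal{P}<<Q$, the equality $Q(A^c)=0$ forces $P_\theta(A^c)=0$, i.e. $P_\theta(A)=1$, for every $\theta\in\Theta$ simultaneously.

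For (ii) I would argue through $Q$. Let $B\subseteq A$ be measurable with $\nu_2(B)=0$. Then $Q(B)=\int_B g_2\,d\nu_2=0$, and rewriting $Q(B)=\int_B g_1\,d\nu_1=0$ with $g_1>0$ throughout $B\subseteq A$ forces $\nu_1(B)=0$; hence $\nu_1\big|_A<<\nu_2\big|_A$. The reverse inclusion $\nu_2\big|_A<<\nu_1\big|_A$ follows by the symmetric argument, interchanging $\nu_1$ and $\nu_2$ and using $g_2>0$ on $A$. Together these give the claimed equivalence of the restricted measures.

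The conceptual key is employing the dominating probability measure $Q$ as a bridge between the two $\sigma$-finite measures on the ``support'' set where both of its densities are strictly positive; once $Q$ is in hand, the remaining steps are routine null-set manipulations. The only place where the full strength of Lemma \ref{lema1} is genuinely needed is the reduction of $\mathcal{P}$ to a single $Q$ with $\mathcal{P}<<Q$, which is exactly what lets $Q(A)=1$ propagate to $P_\theta(A)=1$ for \emph{all} $\theta$ at once. Beyond carefully tracking which direction of absolute continuity is in play at each step, I do not anticipate a serious obstacle.
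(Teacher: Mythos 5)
Your proof is correct, and it rests on the same key ingredient as the paper's proof -- the Halmos--Savage reduction of Lemma \ref{lema1} -- but the construction of $A$ and the verification of (ii) are genuinely different. The paper takes a minimal dominating measure $\nu$ (itself of the form $\sum_i c_i P_{\theta_i}$), defines $A_i=\{\omega:\frac{dP_{\theta_i}}{d\nu}(\omega)>0\}$ and $A=\bigcup_i A_i$, and proves (ii) by contradiction: it decomposes $B=\bigcup_i(A_i\cap B)$, extracts an index $i_0$ with $\nu_1(A_{i_0}\cap B)>0$, and uses the chain rule $\frac{dP_{\theta_{i_0}}}{d\nu}\cdot\frac{d\nu}{d\nu_1}$ together with positivity of the integrand to contradict $P_{\theta_{i_0}}(A_{i_0}\cap B)=0$. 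You instead take $A=\{\frac{dQ}{d\nu_1}>0\}\cap\{\frac{dQ}{d\nu_2}>0\}$, which makes (ii) a direct two-line computation: $\nu_2(B)=0$ gives $Q(B)=\int_B g_2\,d\nu_2=0$, and then $Q(B)=\int_B g_1\,d\nu_1=0$ with $g_1>0$ on $B$ forces $\nu_1(B)=0$ (this last step is exactly the paper's Auxiliary Result I, which you should cite or prove), with the reverse direction by symmetry. Your route is shorter, symmetric in $\nu_1,\nu_2$, avoids both the contradiction argument and the countable decomposition of $B$, and yields as a by-product that $Q\big|_A$, $\nu_1\big|_A$ and $\nu_2\big|_A$ are all three equivalent -- precisely what is needed for the existence of a dominating pair in Definition \ref{FS}, since $Q$ is a countable mixture of the $P_{\theta_i}$ and hence minimal. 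What the paper's construction buys in exchange is that its set $A$ depends only on $\mathcal{P}$ and the minimal dominating measure, not on the particular pair $(\nu_1,\nu_2)$, so one fixed $A$ serves simultaneously for every pair of dominating measures, whereas your $A$ must be rebuilt for each pair. Both proofs use $\mathcal{P}<<Q$ identically to propagate $Q(A)=1$ to $P_\theta(A)=1$ for all $\theta\in\Theta$.
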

\begin{proof}
See Appendix.
\end{proof}

\begin{definition}[Dominating pair]\label{FS}
Consider $\mathcal{P}=\{P_{\theta};\ \theta\in\Theta\}$, where $\Theta$ is a nonempty set, to be a family of probability measures and let $\nu_1$ and $\nu_2$ be $\sigma$-finite measures on $(\Omega,\mathcal{F})$ such that $\mathcal{P}<<\nu_1$ and $\mathcal{P}<<\nu_2$. A pair $(A,\nu)$ is called a dominating pair for the triple $(\mathcal{P},\nu_1,\nu_2)$, where $A\in\mathcal{F}$ and $\nu=\sum_{i=1}^{\infty} c_{i}P_{\theta_i}$ is a minimal dominating measure, for some sequences $\{\theta_i\}_{i=1}^{\infty}$ and $\{c_i\}_{i=1}^{\infty}$ such that $\sum_{i=1}^{\infty} c_{i}=1$, if $\nu_1\big|_A$, $\nu_2\big|_A$ and $\nu\big|_A$ are equivalent and $\nu(A)=1$.
\end{definition}

Note that a dominating pair for $(\mathcal{P},\nu_1,\nu_2)$ always exists. That is guaranteed by Proposition \ref{propmdm} and its proof and Lemma \ref{theo.2}. The use of a dominating pair is crucial to establish the proportionality of likelihood functions obtained from continuous versions of RN derivatives, as stated in the following Theorem.

\begin{teo}\label{CVT}
Let $(A,\nu)$ be a dominating pair for $(\mathcal{P},\nu_1,\nu_2)$. If there exist continuous versions of Radon-Nikod\'{y}m derivatives $f_{1,\theta}\in [\frac{dP_{\theta}|_A}{d\nu_1|_A}]_{\nu|_A}$, $f_{2,\theta}\in [\frac{dP_{\theta}|_A}{d\nu_2|_A}]_{\nu|_A}$, $\forall\theta\in\Theta$, then, for all $h\in[\frac{d\nu_2|_A}{d\nu_1|_A}]_{\nu|_A}$, there exists a measurable set $B_h\in\mathcal{F}(A)$ such that $P_\theta(B_h)=1$, for all $\theta\in\Theta$, $h$ is continuous on $B_h$ and

$$f_{1,\theta}(\omega)=h(\omega)f_{2,\theta}(\omega),\  \forall\theta\in\Theta,\ \forall\omega\in B_h.$$
\end{teo}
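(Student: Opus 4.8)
The plan is to exploit the chain rule for Radon--Nikod\'{y}m derivatives to obtain the proportionality $\nu|_A$-almost everywhere, and then use continuity together with the topological support of the restricted dominating measure to upgrade this almost-everywhere identity to an everywhere identity on a set of full $P_\theta$-measure, simultaneously for every $\theta\in\Theta$. First, since $(A,\nu)$ is a dominating pair, the measures $\nu|_A,\nu_1|_A,\nu_2|_A$ are equivalent and $\nu(A)=1$; because $\nu$ is a probability measure this also gives $\nu(A^c)=0$, hence $P_\theta(A)=1$ for all $\theta$ (as $\mathcal{P}<<\nu$). The chain rule then yields, for each fixed $\theta$, $f_{1,\theta}=h\,f_{2,\theta}$ $\nu|_A$-a.e., where $h$ is the prescribed version of $d\nu_2|_A/d\nu_1|_A$. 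The difficulty is that this holds only off a $\theta$-dependent null set, whereas we need a single exceptional set that works for the possibly uncountable family $\Theta$.

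To obtain a countable handle I would use the sequence $\{\theta_i\}$ furnishing $\nu=\sum_i c_i P_{\theta_i}$ from Definition~\ref{FS}. Set $U_i=\{\omega\in A:f_{2,\theta_i}(\omega)>0\}$, which is relatively open in $A$ by continuity of $f_{2,\theta_i}$, and put $U=\bigcup_i U_i$. The computation $\nu|_A(A\setminus U)=\sum_i c_i\int_{A\setminus U}f_{2,\theta_i}\,d\nu_2|_A=0$ shows that $U$ carries full $\nu|_A$-measure, since $f_{2,\theta_i}=0$ on $A\setminus U$. On each $U_i$ the ratio $f_{1,\theta_i}/f_{2,\theta_i}$ is continuous and, by the chain rule, equals $h$ $\nu|_A$-a.e. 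Let $S=\mathrm{supp}(\nu|_A)$: this is where separability of $\Omega$ is essential, as second countability guarantees both $\nu|_A(A\setminus S)=0$ and, crucially, that every nonempty relatively open subset of $S$ has strictly positive $\nu|_A$-measure. This last property is the device that turns a.e.\ equalities of continuous functions into genuine pointwise equalities on $S$: if two continuous functions agree $\nu|_A$-a.e.\ on an open set, their disagreement set is open and null, hence disjoint from $S$. Applying this on the overlaps $U_i\cap U_j\cap S$ shows the ratios are mutually consistent there, so they glue into a single continuous function $h^*$ on $\tilde U:=U\cap S$.

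With $h^*$ in hand I would fix $\theta\in\Theta$ and note that $f_{1,\theta}=h^*f_{2,\theta}$ holds $\nu|_A$-a.e.\ on $\tilde U$ (using $h=h^*$ $\nu|_A$-a.e.); since all three functions are continuous on $\tilde U$, the full-support property forces this identity to hold \emph{everywhere} on $\tilde U$. Now the exceptional set has disappeared, so the relation is valid for all $\theta$ simultaneously on $\tilde U$. To accommodate the \emph{arbitrary} version $h$ in the statement, I would finally set $B_h=\{\omega\in\tilde U:h(\omega)=h^*(\omega)\}$, a Borel subset of $A$. On $B_h$ we have $h=h^*$, so $f_{1,\theta}=h\,f_{2,\theta}$ holds for every $\theta$; moreover $h|_{B_h}=h^*|_{B_h}$ is the restriction of a function continuous on the open set $\tilde U$, hence continuous; and since $\{h\ne h^*\}$ and $A\setminus\tilde U$ are both $\nu|_A$-null, $\nu(B_h)=1$, whence $P_\theta(B_h)=1$ for all $\theta$ by absolute continuity.

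The main obstacle throughout is precisely the passage from the countable family $\{\theta_i\}$ and from pointwise-a.e.\ statements to a uniform-in-$\theta$ pointwise statement on a common set; continuity of the densities combined with the full-support property of $\nu|_A$ on a separable metric space is exactly what closes this gap, and it is the only place where the topological hypothesis of the section is genuinely needed.
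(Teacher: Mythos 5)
Your proof is correct, but its key mechanism differs from the paper's. Both arguments share the same skeleton: the countable mixture $\nu=\sum_i c_iP_{\theta_i}$ from the dominating pair, the chain rule giving $f_{1,\theta}=h\,f_{2,\theta}$ $\nu|_A$-a.e.\ for each fixed $\theta$, and the relatively open sets $U_i=\{f_{2,\theta_i}>0\}$ whose union has full measure. They then diverge. The paper works with the arbitrary version $h$ throughout: it intersects the countably many agreement sets $A_i=\{f_{1,\theta_i}=h f_{2,\theta_i}\}$ with $B=\bigcup_i U_i$ to obtain a full-measure set $S_h$, proves by a sequential argument that $h$ is continuous on $S_h$, and then, for the uncountable family, notes that each agreement set $B_\theta$ is closed in $S_h$; separability enters there as the hereditary Lindel\"{o}f property, which lets the uncountable intersection $B_h=\bigcap_\theta B_\theta$ be rewritten as a countable one, hence measurable and of full measure. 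You instead build a canonical continuous version $h^*$ of $d\nu_2|_A/d\nu_1|_A$ by gluing the ratios $f_{1,\theta_i}/f_{2,\theta_i}$ on the topological support $S$ of $\nu|_A$; separability enters through the support (its complement is null and every nonempty relatively open subset of $S$ has positive measure), which upgrades a.e.\ identities between continuous functions to everywhere identities. This yields proportionality with $h^*$ \emph{everywhere} on the version-independent set $\tilde U=U\cap S$, simultaneously for all $\theta$, with no Lindel\"{o}f step and no measurability concern; the arbitrary $h$ is handled trivially at the end by intersecting with $\{h=h^*\}$. Your route proves a slightly stronger intermediate fact (an $h$-free full-measure set carrying the identity, essentially a local rederivation of the uniqueness of continuous versions, Theorem \ref{uniqcv}), while the paper's route avoids constructing $h^*$ and produces $B_h$ directly as the set where the stated identity with the given $h$ holds. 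Two harmless slips in your write-up: on $A\setminus U$ you may only assert $f_{2,\theta_i}\le 0$ pointwise (a continuous version agrees with a non-negative function only a.e.), which still forces $P_{\theta_i}(A\setminus U)=0$ since probabilities are non-negative; and $\tilde U$ is open in $S$, not in $A$, though the restriction of a continuous function to a subset is continuous regardless, so your conclusion stands.
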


\begin{proof}
See Appendix.
\end{proof}

Theorem \ref{CVT} defines a specific subclass of RN versions (the one with the continuous versions) that always satisfies the proportionality relation (\ref{main.eq}). Moreover, if the dominating measures under consideration are locally finite (LF) - see Appendix A, the continuous version (w.r.t. each of the measures) is unique (guaranteed by Theorem \ref{uniqcv} - see Appendix A). In many statistical models, there exist, and it is straightforward to obtain, continuous versions of $f_{1,\theta}$ and $f_{2,\theta}$ in $\Omega$, for all $\theta\in \Theta$.

Let $S_\nu$ be the support of a measure $\nu$ on $(\Omega,\mathcal{F})$ (see Appendix A for the formal definition of support and related results).
The following corollary applies to several examples of statistical models.
\begin{cor}\label{CVTcor1}
Suppose that $\nu_1$ and $\nu_2$ are LF measures with $S_{\nu_1}=\Omega$. Suppose also that there exist continuous versions of Radon-Nikod\'{y}m derivatives $f_{1,\theta}\in [\frac{dP_{\theta}}{d\nu_1}]_{\nu}$, $f_{2,\theta}\in [\frac{dP_{\theta}}{d\nu_2}]_{\nu}$, for all $\theta\in\Theta$, where $\nu$ is a minimal dominating measure, and that $f_{1,\theta}(\omega)>0$ and $f_{2,\theta}(\omega)>0$, for all $\omega\in\Omega$ and $\theta\in\Theta$. Then

$$f_{1,\theta}(\omega)\propto_{\theta}f_{2,\theta}(\omega),\;\;\forall\omega\in\Omega,\;\forall\theta\in\Theta.$$
\end{cor}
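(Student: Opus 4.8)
The plan is to derive Corollary \ref{CVTcor1} as a direct application of Theorem \ref{CVT}, using the extra hypotheses (local finiteness, full support of $\nu_1$, strict positivity of the continuous densities on all of $\Omega$) to upgrade the conclusion from ``proportionality on a set $B_h$ of full $P_\theta$-measure'' to ``proportionality on the entirety of $\Omega$''. First I would invoke the existence of a dominating pair $(A,\nu)$ for $(\mathcal{P},\nu_1,\nu_2)$, guaranteed by the remark following Definition \ref{FS}, so that $\nu_1|_A$, $\nu_2|_A$ and $\nu|_A$ are equivalent with $\nu(A)=1$. Since the hypothesised continuous versions $f_{1,\theta}$, $f_{2,\theta}$ are defined and continuous on all of $\Omega$, their restrictions to $A$ supply the continuous versions required by Theorem \ref{CVT}. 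Applying the theorem then yields, for any fixed $h\in[\frac{d\nu_2|_A}{d\nu_1|_A}]_{\nu|_A}$, a set $B_h\in\mathcal{F}(A)$ with $P_\theta(B_h)=1$ for all $\theta$, on which $h$ is continuous and $f_{1,\theta}=h\,f_{2,\theta}$ holds for all $\theta$.

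Next I would argue that $B_h$ is dense in $\Omega$. Because $S_{\nu_1}=\Omega$ and $\nu_1|_A\sim\nu|_A$ with $\nu(A)=1$, the support of $\nu$ is $\Omega$ (up to the standard identifications), so every nonempty open set carries positive $\nu$-mass; since $\nu(B_h)=1$ by the measure-one conclusion applied through $\nu=\sum_i c_i P_{\theta_i}$, the complement $B_h^c$ has empty interior, hence $\overline{B_h}=\Omega$. The key step is then a continuity/density argument: fix any $\theta$ and consider the two continuous functions $f_{1,\theta}$ and $h\,f_{2,\theta}$. They agree on the dense set $B_h$ where $h$ is continuous, and $h\,f_{2,\theta}$ is continuous there as a product of continuous functions. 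By strict positivity of $f_{2,\theta}$ we may write $h=f_{1,\theta}/f_{2,\theta}$ on $B_h$, which extends $h$ to a continuous, strictly positive function on all of $\Omega$; two continuous functions agreeing on a dense subset of a metric space agree everywhere by continuity, so $f_{1,\theta}(\omega)=h(\omega)f_{2,\theta}(\omega)$ for every $\omega\in\Omega$ and every $\theta$. Dividing by $f_{2,\theta}(\omega)>0$ gives $f_{1,\theta}(\omega)/f_{2,\theta}(\omega)=h(\omega)$, a ratio independent of $\theta$, which is exactly $f_{1,\theta}(\omega)\propto_\theta f_{2,\theta}(\omega)$ for all $\omega\in\Omega$ and all $\theta\in\Theta$.

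The main obstacle I anticipate is the extension of the proportionality from the full-measure set $B_h$ to \emph{all} of $\Omega$, rather than the algebraic manipulation itself. This hinges on two points that must be handled carefully: establishing that $B_h$ is genuinely dense (which is where $S_{\nu_1}=\Omega$ together with the equivalence of the restricted measures is essential — without full support one only controls behaviour $\nu$-a.e., not pointwise on all of $\Omega$), and confirming that the common extension of $h$ is continuous on $\Omega$ so that the agreement on a dense set propagates by continuity. Strict positivity of $f_{2,\theta}$ is what lets us realise $h$ as the genuinely continuous ratio $f_{1,\theta}/f_{2,\theta}$ globally, avoiding division-by-zero pathologies and guaranteeing the proportionality constant is a well-defined finite positive number at every $\omega$. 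Once density and global continuity are secured, the conclusion follows immediately, so I would concentrate the technical effort on the topological support argument linking $S_{\nu_1}=\Omega$, the dominating pair, and the denseness of $B_h$.
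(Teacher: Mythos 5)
Your overall route---apply Theorem \ref{CVT} to obtain proportionality on a full-measure set $B_h$, then upgrade to all of $\Omega$ by a density-and-continuity argument---is genuinely different from the paper's and can be made to work, but as written your density step contains a real gap. You claim that $S_{\nu_1}=\Omega$ together with $\nu_1|_A\sim\nu|_A$ and $\nu(A)=1$ forces $S_\nu=\Omega$, and hence that $\nu(B_h)=1$ makes $B_h$ dense. That inference is false: the equivalence $\nu_1|_A\sim\nu|_A$ only controls subsets of $A$ and says nothing about the $\nu_1$-mass of $A^c$. Take $\Omega=\mathbb{R}$, $\nu_1$ Lebesgue measure, $\mathcal{P}=\{P\}$ with $P$ the uniform distribution on $(0,1)$, so that $\nu=P$ and $A=(0,1)$ is a valid choice: then $S_{\nu_1}=\mathbb{R}$, $\nu_1|_A\sim\nu|_A$ and $\nu(A)=1$, yet $S_\nu=[0,1]\neq\Omega$ and the full-$\nu$-measure set $(0,1)$ is not dense in $\Omega$. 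So full $\nu$-measure alone cannot give density of $B_h$; some hypothesis of the corollary beyond the dominating-pair structure must enter at this point.

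The repair uses the strict-positivity hypothesis, which you invoke only later: since $f_{1,\theta}>0$ everywhere and $P_\theta(E)=\int_E f_{1,\theta}\,d\nu_1$ for all measurable $E$, any $P_\theta$-null set is $\nu_1$-null, i.e.\ $\nu_1\ll P_\theta$. Hence $P_\theta(B_h)=1$ gives $\nu_1(B_h^c)=0$ directly, and $S_{\nu_1}=\Omega$ (every nonempty open set has positive $\nu_1$-mass) then shows $B_h^c$ has empty interior, so $B_h$ is dense; you never need $S_\nu$ at all. With that fixed, the rest of your argument is sound: for each $\theta$ the ratio $f_{1,\theta}/f_{2,\theta}$ is continuous on $\Omega$ and agrees with $h$ on the dense set $B_h$, so all these ratios coincide everywhere, producing a $\theta$-free factor with $f_{1,\theta}=hf_{2,\theta}$ on all of $\Omega$. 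For comparison, the paper does not go through Theorem \ref{CVT} at all: it uses strict positivity to conclude that all the $P_\theta$'s, $\nu_1$ and $\nu_2$ are mutually equivalent with common support $\Omega$ (via Proposition \ref{TCVLF}), observes that each $h_\theta=f_{1,\theta}/f_{2,\theta}$ is a continuous version of $d\nu_2/d\nu_1$, and invokes the uniqueness of continuous versions for LF measures (Theorem \ref{uniqcv}) to conclude that all the $h_\theta$'s coincide. Your density-extension argument essentially re-proves that uniqueness statement by hand, at the cost of routing through the heavier Theorem \ref{CVT}; the paper's route is shorter and isolates exactly where local finiteness is used.
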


\begin{proof}
See Appendix.
\end{proof}

\begin{prop}\label{TCVLF2}
Let $\mathcal{P}=\{P_{\theta};\ \theta\in\Theta\}$ be a family of probability measures and $\nu_1$ and $\nu_2$ LF measures on $(\Omega,\mathcal{F})$, where $\Theta$ is a nonempty set, $\mathcal{P}<<\nu_1$, $\mathcal{P}<<\nu_2$. Let $(\nu_3,A)$ be a dominating pair for $(\mathcal{P},\nu_1,\nu_2)$ and $S_{\theta}$, $S_{1}$, $S_{2}$ and $S_{3}$ be the supports of $P_{\theta}$ (for each $\theta\in\Theta$), $\nu_1$, $\nu_2$ and $\nu_3$, respectively. If there exists a continuous version on $S_{\theta}$ of the Radon-Nikod\'{y}m derivative $f_{2,\theta}\in [\frac{dP_{\theta}|_{S_{\theta}}}{d\nu_2|_{S_{\theta}}}]_{\nu_1|_{S_{\theta}}}$, $\forall\theta\in\Theta$, and there exists a continuous version on $S_{3}$ of the Radon-Nikod\'{y}m derivative $h\in[\frac{d\nu_2|_{S_{3}}}{d\nu_1|_{S_{3}}}]_{\nu_1|_{S_{3}}}$, then $f_{2,\theta}$ and $h$ are unique in $S_{\theta}$ and $S_3$, respectively, and there exists an unique continuous version of $f_{1,\theta}\in [\frac{dP_{\theta}|_{S_{\theta}}}{d\nu_1|_{S_{\theta}}}]_{\nu_1|_{S_{\theta}}}$ on $S_{\theta}$, for all $\theta\in\Theta$. Moreover, defining
$\Phi_\omega=\{\theta\in\Theta;\ \omega\in S_{\theta}\}$, we have that $f_{1,\theta}(\omega)$ and $f_{2,\theta}(\omega)$ are proportional for every $\theta\in\Phi_\omega$.
\end{prop}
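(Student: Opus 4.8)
The plan is to construct the continuous version $f_{1,\theta}$ on $S_\theta$ as the product of the two given continuous versions via the Radon-Nikod\'ym chain rule, and to derive every uniqueness and proportionality claim from the single principle that a continuous density is determined everywhere on the support of its dominating measure.

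First I would establish the chain of support inclusions $S_\theta\subseteq S_3\subseteq S_1$. Since $(\nu_3,A)$ is a dominating pair, $\nu_3=\sum_i c_iP_{\theta_i}$ is a minimal dominating measure, so $\mathcal P<<\nu_3$ by Lemma \ref{lema1}, giving $P_\theta<<\nu_3$ and hence $S_\theta\subseteq S_3$; minimality (Definition \ref{defmdm}) gives $\nu_3<<\nu_1$, hence $S_3\subseteq S_1$. The crucial observation to record is that on each of these supports the restriction of $\nu_1$ has full support: for $\omega\in S_\theta$ and a relatively open $U\ni\omega$ one has $P_\theta(U)>0$, and $P_\theta<<\nu_1$ forces $\nu_1(U)>0$; likewise for $\omega\in S_3$ one has $\nu_3(U)>0$, and $\nu_3<<\nu_1$ forces $\nu_1(U)>0$. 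Because $\nu_1$ is LF, Theorem \ref{uniqcv} then applies on both $S_\theta$ and $S_3$, which yields at once the asserted uniqueness of the continuous versions $f_{2,\theta}$ and $h$ on their respective supports.

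Next I would verify the chain rule on the restricted spaces. The existence of $h\in[\frac{d\nu_2|_{S_3}}{d\nu_1|_{S_3}}]_{\nu_1|_{S_3}}$ encodes $\nu_2|_{S_3}<<\nu_1|_{S_3}$; restricting to the measurable subset $S_\theta\subseteq S_3$ gives $\nu_2|_{S_\theta}<<\nu_1|_{S_\theta}$ and shows that $h|_{S_\theta}$ is itself a version of $\frac{d\nu_2|_{S_\theta}}{d\nu_1|_{S_\theta}}$, since $\nu_2(B)=\int_B h\,d\nu_1$ for every $B\subseteq S_\theta$. Combined with $f_{2,\theta}\in[\frac{dP_\theta|_{S_\theta}}{d\nu_2|_{S_\theta}}]_{\nu_1|_{S_\theta}}$ and the $\sigma$-finiteness of the restricted measures, the standard chain rule shows that $f_{1,\theta}:=f_{2,\theta}\cdot(h|_{S_\theta})$ is a version of $\frac{dP_\theta|_{S_\theta}}{d\nu_1|_{S_\theta}}$. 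As a product of two functions continuous on $S_\theta$ it is continuous there, and by the full-support remark above together with Theorem \ref{uniqcv} it is the unique such continuous version, establishing the existence and uniqueness claim.

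Finally, the proportionality statement is immediate from this construction: for any $\omega$ and any $\theta\in\Phi_\omega$ one has $\omega\in S_\theta\subseteq S_3$, so both $f_{1,\theta}(\omega)$ and $f_{2,\theta}(\omega)$ are defined and $f_{1,\theta}(\omega)=h(\omega)f_{2,\theta}(\omega)$, with the factor $h(\omega)$ independent of $\theta$; hence $f_{1,\theta}(\omega)\propto_\theta f_{2,\theta}(\omega)$ on $\Phi_\omega$. I expect the main obstacle to be the careful bookkeeping of restrictions, specifically confirming that the continuous function $h$ defined on the larger support $S_3$ restricts to a genuine Radon-Nikod\'ym derivative on the smaller support $S_\theta$ and that the chain rule remains valid after this restriction, rather than any deep measure-theoretic difficulty; it is the full-support property of $\nu_1$ on each support that upgrades the almost-everywhere identities to everywhere identities and thereby drives all three conclusions.
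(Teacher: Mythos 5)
Your proposal is correct and takes essentially the same route as the paper: establish $S_\theta\subseteq S_3$ (Proposition \ref{TCVLF}), define $f_{1,\theta}=h\,f_{2,\theta}$ on $S_\theta$ via the Radon-Nikod\'{y}m chain rule, obtain uniqueness from Theorem \ref{uniqcv}, and read off the $\theta$-independence of the factor $h(\omega)$ for the proportionality claim; you merely spell out the restriction bookkeeping and full-support observations that the paper's two-line proof leaves implicit. One small slip worth fixing: the uniqueness of $f_{2,\theta}$ (a derivative with respect to $\nu_2$) rests on $\nu_2$, not $\nu_1$, being locally finite, but since both measures are assumed LF nothing breaks.
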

\begin{proof}
Simply note that $S_{\theta} \subset S_{3}$ (Proposition \ref{TCVLF} - see Appendix A) and define $f_{1,\theta}(\omega)=h(\omega)f_{2,\theta}(\omega)$, $\forall\omega\in S_{\theta},\;\forall\theta\in \Theta$. The uniqueness of $f_{1,\theta}$, $f_{2,\theta}$ and $h$ is guaranteed by Theorem \ref{uniqcv} (see Appendix A).
\end{proof}

\section{The predictive measure as a dominating measure}

\citet{Rafael} propose a novel methodology for non-parametric density ratio estimation and show how this general framework
can be extended to address the problem of estimating the likelihood function when this is intractable.
In particular, the authors use the density of the prior predictive measure in the denominator of
the ratio and, therefore, obtain an approximation for the likelihood function induced by the use of this particular dominating measure. We now investigate when the prior predictive measure can be used as a dominating measure for the model.

Let $X$ be a sample from a population in a parametric family $\mathcal{P}=\{P_\theta;\ \theta\in\Theta\}$, where $\Theta\subset {\R}^k$ for a fixed $k\in {\N}$ and $\mathcal{X}$ be the range of $X$. Let $R$ be a non-zero prior distribution on $\Theta$ and denote by $\mathcal{B}_{\mathcal{X}}$ and $\mathcal{B_{\theta}}$ the $\sigma$-fields on $\mathcal{X}$ and $\Theta$, respectively. Suppose that the function $P_\theta(B):\Theta\longmapsto[0,1]$ is Borel for any fixed $B\in\mathcal{B}_\mathcal{X}$. Then, there is a unique probability measure $P$ on $(\mathcal{X}\times\Theta,\sigma(\mathcal{B}_\mathcal{X}\times\mathcal{B}_\Theta))$ (\cite{Shao}, Chapter 4) such that, for $B\in\mathcal{B}_\mathcal{X}$ and $C\in\mathcal{B}_\Theta$, $P(B\times C)=\int_C P_\theta(B)dR$.
The posterior distribution of $\theta$ given $X=x$ - denoted by $P_{\theta|x}$, is obtained by the Bayes Formula.

\emph{(Bayes Formula) \; Assume that $\mathcal{P}$ is dominated by a $\sigma$-finite measure $\nu$ and $f_\theta(x)=\frac{dP_\theta}{d\nu}(x)$ is a Borel function on $(\mathcal{X}\times\Theta,\sigma(\mathcal{B}_\mathcal{X}\times\mathcal{B}_\Theta))$. Suppose that $m(x)=\int_\Theta f_\theta(x)dR>0$. Then, the posterior distribution $P_{\theta|x}$ is dominated by $R$ and
$$\frac{dP_{\theta|x}}{dR}(\theta)=\frac{f_\theta(x)}{m(x)}.$$}

The function $m$ in the Bayes Formula (BF) is called the marginal p.d.f. of $X$ with respect to $\nu$. Note that the p.d.f in the BF is well-defined only for the points $X=x$ such that $m(x)>0$. In fact, for a value $x$ such that $m(x)=0$, the likelihood function vanishes $R$-almost everywhere. Simply note that, if $m(x)=0$ then $\int_\Theta f_\theta(x)dR=0$ and, since $R(\Theta)>0$, we have that $R(Z_{x}^c)=0$, for $Z_x=\{\theta\in\Theta;\ f_\theta(x)=0\}$.

The zero set of the function $m$ actually plays an important role for the predictive measure $\lambda$, which is defined on $(\mathcal{X},\mathcal{B}_\mathcal{X})$ by $\lambda(A)=\int_A m d\nu,\ \forall A\in\mathcal{B}_\mathcal{X}$. The following four results relate the predictive measure to the context of dominating measures.
\begin{prop}\label{PMDM1}
The predictive measure is independent of the choice of the measure that dominates the population $\mathcal{P}$.
\end{prop}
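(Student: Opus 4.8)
The plan is to show that, although the formula $\lambda(A)=\int_A m\,d\nu$ is written in terms of a chosen dominating measure $\nu$ (entering both directly and through the density $m$, which is built from $f_\theta=\frac{dP_\theta}{d\nu}$), the resulting set function coincides with the $X$-marginal of the joint measure $P$, an object defined purely from $\mathcal{P}$ and the prior $R$ with no reference to $\nu$. Concretely, I will establish that for every $A\in\mathcal{B}_\mathcal{X}$,
$$\lambda(A)=\int_\Theta P_\theta(A)\,dR,$$
and since the right-hand side does not involve $\nu$, any two admissible dominating measures must yield the very same predictive measure.

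First I would fix $A\in\mathcal{B}_\mathcal{X}$ and substitute the definition of $m$, writing $\lambda(A)=\int_A\big(\int_\Theta f_\theta(x)\,dR\big)\,d\nu(x)$. The heart of the argument is then to interchange the two integrals. Here I would invoke Tonelli's theorem rather than Fubini's: the integrand $(x,\theta)\mapsto f_\theta(x)\mathbf{1}_A(x)$ is non-negative, it is jointly $\sigma(\mathcal{B}_\mathcal{X}\times\mathcal{B}_\Theta)$-measurable by the standing assumption of the Bayes Formula, and the product measure $\nu\times R$ is $\sigma$-finite (as $\nu$ is $\sigma$-finite and $R$ is a finite measure). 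Tonelli therefore applies with no integrability hypothesis and gives $\lambda(A)=\int_\Theta\big(\int_A f_\theta(x)\,d\nu(x)\big)\,dR$.

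It then remains only to identify the inner integral. By the very definition of the Radon-Nikod\'{y}m derivative, $\int_A f_\theta\,d\nu=\int_A\frac{dP_\theta}{d\nu}\,d\nu=P_\theta(A)$, so the chain collapses to $\lambda(A)=\int_\Theta P_\theta(A)\,dR$, as claimed. I would close by noting that this expression is manifestly free of $\nu$: writing $\lambda_{\nu_1}$ and $\lambda_{\nu_2}$ for the predictive measures obtained from two dominating measures $\nu_1$ and $\nu_2$, the identity forces $\lambda_{\nu_1}(A)=\lambda_{\nu_2}(A)$ for every $A\in\mathcal{B}_\mathcal{X}$, hence $\lambda_{\nu_1}=\lambda_{\nu_2}$.

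I do not expect a genuine obstacle here; the only point requiring care is the legitimacy of the interchange of integrals, and invoking Tonelli rather than Fubini sidesteps any verification of integrability, since non-negativity together with joint measurability already suffices. A secondary remark worth making is that the conclusion is insensitive even to the choice of \emph{version} of $f_\theta$ for a fixed $\nu$, because $\int_A f_\theta\,d\nu$ recovers $P_\theta(A)$ regardless of the representative selected; thus neither the dominating measure nor the density version has any bearing on $\lambda$.
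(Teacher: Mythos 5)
Your proof is correct and takes essentially the same route as the paper's: both interchange the order of integration to reduce $\lambda(A)$ to $\int_\Theta P_\theta(A)\,dR$, an expression free of the dominating measure (the paper simply writes this out symmetrically for two measures $\nu$ and $\mu$, showing both predictive measures equal this common value). Your appeal to Tonelli rather than Fubini is a minor technical refinement (non-negativity and joint measurability suffice, with no integrability check), not a different argument.
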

\begin{proof}
See Appendix.
\end{proof}

\begin{prop}\label{prop.predictive}
If $m(x)>0$ for all $x\in\mathcal{X}$, then the predictive measure $\lambda$ dominates $\mathcal{P}$.
\end{prop}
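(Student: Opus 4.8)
The plan is to prove the domination $\mathcal{P}<<\lambda$ directly from the definition of absolute continuity, exploiting the fact that $\lambda$ is built as an indefinite integral of $m$ against $\nu$. First I would fix an arbitrary $\theta\in\Theta$ and a set $A\in\mathcal{B}_\mathcal{X}$ with $\lambda(A)=0$, with the goal of deducing $P_\theta(A)=0$. Since $\theta$ and $A$ are arbitrary, this yields $P_\theta<<\lambda$ for every $\theta$, which is exactly the assertion. By Proposition \ref{PMDM1} the construction of $\lambda$ does not depend on which dominating measure $\nu$ we use, so we may work with the fixed $\nu$ appearing in the Bayes Formula.

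The key chain of implications is $\lambda(A)=0\Rightarrow\nu(A)=0\Rightarrow P_\theta(A)=0$. For the first implication, note that $m\geq 0$ by construction, since $m(x)=\int_\Theta f_\theta(x)\,dR$ is an integral of the non-negative densities $f_\theta$. Thus $\lambda(A)=\int_A m\,d\nu=0$ together with the hypothesis $m(x)>0$ for all $x\in\mathcal{X}$ forces $\nu(A)=0$: an integral of a strictly positive integrand over $A$ can vanish only if $A$ is $\nu$-null. The second implication is immediate from $P_\theta<<\nu$, which holds because $\mathcal{P}<<\nu$.

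The only step requiring a short measure-theoretic argument is the claim that $\int_A m\,d\nu=0$ with $m>0$ on $A$ implies $\nu(A)=0$. I would establish it by decomposing $A=\bigcup_{n\geq 1}A_n$ with $A_n=\{x\in A:\ m(x)>1/n\}$ and using $\nu(A_n)\leq n\int_{A_n}m\,d\nu\leq n\int_A m\,d\nu=0$, so that $\nu(A_n)=0$ for every $n$ and hence $\nu(A)=0$ by countable subadditivity. This is the crux of the argument and the point where the hypothesis $m>0$ everywhere is essential: without it, $m$ could vanish on a set of positive $\nu$-measure that nonetheless carries mass of some $P_\theta$, and the domination would fail. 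No genuine obstacle is anticipated beyond this elementary step.
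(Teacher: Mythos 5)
Your proof is correct and takes essentially the same route the paper intends: the paper's proof is just the one-line remark that the result ``follows directly from the definition of $m$'', and your chain $\lambda(A)=0\Rightarrow\nu(A)=0\Rightarrow P_\theta(A)=0$ is exactly that direct argument. The elementary step you establish via the sets $A_n=\{x\in A:\ m(x)>1/n\}$ is precisely the paper's Auxiliary Result I (Appendix A), so your write-up simply makes the omitted details explicit.
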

\begin{proof}
Follows directly from the definition of $m$.
\end{proof}

Note that, for $\lambda$ to dominate $\mathcal{P}$, it is enough to have $\nu(N)=0$. Nevertheless, the result is not guaranteed if we only have that $P_\theta(N)=0$ $R$-almost everywhere.

\begin{teo}\label{PMDM2}
The predictive measure $\lambda$ dominates $P_\theta$ if and only if $P_\theta(N)=0$ and, therefore, $\lambda$ dominates $\mathcal{P}$ if and only if $P_\theta(N)=0$ for all $\theta\in\Theta$.
\end{teo}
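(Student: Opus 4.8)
The plan is to reduce everything to a clean description of the $\lambda$-null sets and then to exploit the hypothesis $\mathcal{P}<<\nu$. Writing $N=\{x\in\mathcal{X};\ m(x)=0\}$ for the zero set of the marginal density, the first step I would carry out is the observation that, since $m\geq 0$ and $\lambda(A)=\int_A m\,d\nu$, a set $A\in\mathcal{B}_\mathcal{X}$ satisfies $\lambda(A)=0$ if and only if $m=0$ $\nu$-almost everywhere on $A$, equivalently $\nu(A\cap N^c)=0$. In words, the only discrepancy between $\nu$-null and $\lambda$-null sets is concentrated on $N$, and this single characterisation is what makes both directions short.

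For the forward implication, suppose $\lambda$ dominates $P_\theta$, i.e. $P_\theta<<\lambda$. Since $m\equiv 0$ on $N$, we have $\lambda(N)=\int_N m\,d\nu=0$, so absolute continuity immediately yields $P_\theta(N)=0$. Thus domination by $\lambda$ forces the model to place no mass on $N$.

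For the converse, assume $P_\theta(N)=0$ and let $A$ be any measurable set with $\lambda(A)=0$. I would split $P_\theta(A)=P_\theta(A\cap N)+P_\theta(A\cap N^c)$. The first term is bounded by $P_\theta(N)=0$. For the second term, the characterisation from the first step gives $\nu(A\cap N^c)=0$, and since $P_\theta<<\nu$ (because $\mathcal{P}<<\nu$), we conclude $P_\theta(A\cap N^c)=0$. Hence $P_\theta(A)=0$, establishing $P_\theta<<\lambda$. The statement for the whole family is then immediate: $\lambda$ dominates $\mathcal{P}$ means precisely that it dominates every $P_\theta$, which by the equivalence just proved holds if and only if $P_\theta(N)=0$ for all $\theta\in\Theta$.

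I do not anticipate a serious obstacle here; the only point requiring care is the characterisation of $\lambda$-null sets in the first step, which relies on nothing more than $m\geq 0$ but carries the entire conceptual content. The essential idea is that passing from $\nu$ to the predictive measure $\lambda$ annihilates exactly the set $N$, so $P_\theta$ can fail to be dominated by $\lambda$ only through the mass it assigns to $N$. This also explains the preceding remark: $\nu(N)=0$ suffices (it gives $P_\theta(N)=0$ via $P_\theta<<\nu$), whereas the pointwise condition $P_\theta(N)=0$ for the relevant $\theta$ is exactly what is necessary and sufficient.
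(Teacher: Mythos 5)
Your proposal is correct and follows essentially the same route as the paper's own proof: both directions hinge on the facts that $\lambda(N)=\int_N m\,d\nu=0$ and that $\lambda(A)=0$ forces $\nu(A\cap N^c)=0$ (because $m>0$ on $N^c$), after which $P_\theta<<\nu$ and $P_\theta(N)=0$ finish the converse. Your explicit characterisation of the $\lambda$-null sets and the split $P_\theta(A)=P_\theta(A\cap N)+P_\theta(A\cap N^c)$ are just a slightly more verbose packaging of the identical argument.
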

\begin{proof}
See Appendix.
\end{proof}

The following result is of more practical use.

\begin{teo}\label{PMDM3}
If $M_\theta=\{x\in\mathcal{X};\ f_\theta(x)>0\}$ does not depend on $\theta$, then $\mathcal{P}<<\lambda$.
\end{teo}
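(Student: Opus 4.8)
The plan is to reduce the statement to the characterisation already established in Theorem \ref{PMDM2}. Since that theorem gives $\mathcal{P}<<\lambda$ if and only if $P_\theta(N)=0$ for every $\theta\in\Theta$, where $N=\{x\in\mathcal{X};\ m(x)=0\}$ is the zero set of the marginal density, it suffices to verify this last condition under the hypothesis that $M_\theta=\{x\in\mathcal{X};\ f_\theta(x)>0\}$ equals a fixed set $M$ independent of $\theta$.

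First I would record that each $P_\theta$ is concentrated on its own support set. Because $f_\theta\ge 0$ $\nu$-a.e. and $f_\theta=0$ on $M_\theta^c$, we have $P_\theta(M_\theta^c)=\int_{M_\theta^c}f_\theta\,d\nu=0$, so $P_\theta(M)=P_\theta(M_\theta)=1$ for all $\theta$. Thus the entire $P_\theta$-mass lives on the common set $M$, for every $\theta$ simultaneously.

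The key step is then to show that $N$ and $M$ are disjoint. Fix $x\in M$. Since $M_\theta=M$ for every $\theta$, the membership $x\in M_\theta$ holds for all $\theta$ at once, which gives $f_\theta(x)>0$ for every $\theta\in\Theta$. As $R$ is a non-zero prior measure and the integrand is strictly positive on all of $\Theta$, we obtain $m(x)=\int_\Theta f_\theta(x)\,dR>0$, so $x\notin N$. Hence $N\subseteq M^c$, and therefore $P_\theta(N)\le P_\theta(M^c)=0$ for every $\theta\in\Theta$. Applying Theorem \ref{PMDM2} then yields $\mathcal{P}<<\lambda$, as claimed.

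The only delicate point, and what I expect to be the crux, is the strict inequality $m(x)>0$ on $M$: it relies essentially on the common-support assumption, so that $f_\theta(x)>0$ holds for \emph{all} $\theta$ jointly rather than merely for $R$-almost every $\theta$, together with $R(\Theta)>0$. Were $M_\theta$ allowed to vary with $\theta$, one could only guarantee positivity of $f_\theta(x)$ on a $\theta$-dependent set, and the argument for strict positivity of the integral against the full prior would collapse — precisely the situation anticipated in the remark preceding Theorem \ref{PMDM2}, where knowing $P_\theta(N)=0$ only $R$-almost everywhere is not enough.
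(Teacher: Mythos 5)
Your proof is correct, but it takes a genuinely different route from the paper's. The paper proves Theorem \ref{PMDM3} directly, without invoking Theorem \ref{PMDM2}: it takes an arbitrary $A\in\mathcal{B}_\mathcal{X}$ with $\lambda(A)=0$, reduces the claim to showing $\nu(A\cap M)=0$, and argues by contradiction --- if $\nu(A\cap M)>0$, then $P_\theta(A)=P_\theta(A\cap M)=\int_{A\cap M}f_\theta\,d\nu>0$ for every $\theta$ (strict positivity of $f_\theta$ on $M$ together with Auxiliary Result I), and then Fubini gives $\lambda(A)=\int_\Theta P_\theta(A)\,dR>0$, contradicting $\lambda(A)=0$. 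You instead reduce everything to the characterisation in Theorem \ref{PMDM2} via a pointwise argument: $m(x)=\int_\Theta f_\theta(x)\,dR>0$ for every $x\in M$, hence $N\subseteq M^c$, hence $P_\theta(N)\le P_\theta(M^c)=0$ for all $\theta$. Your route is more modular and arguably cleaner: it reuses the equivalence already established instead of re-deriving part of it, and it dispenses with the Fubini interchange entirely, since positivity of $m$ on $M$ is read off from the definition of $m$ at a fixed $x$ (again by Auxiliary Result I, now applied on $(\Theta,\mathcal{B}_\Theta,R)$ with $R(\Theta)>0$, using that $\theta\mapsto f_\theta(x)$ is a measurable section of the jointly Borel density). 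What the paper's self-contained argument buys is independence from Theorem \ref{PMDM2}, so that Theorem \ref{PMDM3} stands even if one skips the preceding result. Both proofs hinge on exactly the crux you identify: the common-support hypothesis makes $f_\theta(x)>0$ for \emph{all} $\theta$ simultaneously, so integration against the full non-zero prior is strictly positive; with $\theta$-dependent supports one would only control $f_\theta(x)$ on a $\theta$-dependent set, which is the failure mode flagged in the remark preceding Theorem \ref{PMDM2}.
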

\begin{proof}
See Appendix.
\end{proof}

\section{Examples}\label{secexamp}

We now explore the results presented in this paper through examples. We consider models for which the choice of the dominating measure require some effort and/or instigate some interesting discussion. The Likelihood Proportionality Theorem is implicitly applied to guarantee that valid likelihood functions are obtained and the continuity of the densities discussed in Section \ref{seccontv} is highlighted.

\subsection{Finite-dimensional random variables}

It is often the case in which the statistical model under consideration is a family of probability measures consisting of a finite-dimensional random variable with discrete and/or continuous coordinates. This covers a wide range of models from iid univariate random variables to highly structured hierarchical Bayesian models with mixture components. In this case, the most common choice for dominating measure is the appropriate product of the counting and Lebesgue measures. Nevertheless, any probability measure with common support is a valid dominating measure and, therefore, admits versions that lead to proportional likelihoods. A particularly interesting example, that goes beyond a purely discrete or continuous random variable, are point-mass mixtures.

Consider the probability measure of a r.v. $Y$ such that $P(Y=a_i)=p_i>0$, for $i=1,\ldots,m$ and $\sum_{i=1}^mp_i=p<1$, and $Y=Z_j$ w.p. $q_j$, such that $Z_j$ is a continuous r.v. on $B_j\subset\mathds{R}$ with (continuous) Lebesgue density $f_j$, for $j=1,\ldots,n$ and $\sum_{j=1}^nq_j=1-p$. In this case, \cite{Raf} show that the probability measure $P$ of $Y$ is dominated by the measure $\nu_1+\nu_2$, where $\nu_1$ is the counting measure and $\nu_2$ is the Lebesgue measure and
\begin{equation}\label{Mx}
\ds \frac{dP}{d(\nu_1+\nu_2)}(y)=\sum_{i=1}^mp_i\mathbb{I}_{a_i}(y)+\sum_{j=1}^nq_jf_j(y)\mathbb{I}_{B_j\setminus A}(y),
\end{equation}
where $A=\{a_i,\ldots,a_n\}$. Typical point-mass mixtures consider the $B_j$'s to be the same. The use of a non-valid RN derivative, in particular by ignoring the indicator functions in (\ref{Mx}), leads to misspecified likelihood functions with possibly serious consequences in the inference process. The density in (\ref{Mx}) is uniquely defined on $A$ and one should always consider continuous versions of the $f_j$'s (in $B_j$) when these exist. These versions not only guarantee the proportionality of likelihoods obtained for different dominating measures (see Theorem \ref{CVT}) as it also guarantees that the likelihood obtained is the limit in (\ref{ExLim}).

The result from \cite{Raf} is actually more general and provides a valid dominating measure with the respective RN derivative for probability measures consisting of a countable mixture of mutually singular probability measures.

\subsection{Exponential families}

A parametric family $\mathcal{P}=\{P_{\theta};\ \theta\in\Theta\}$ dominated by a $\sigma$-finite measure $\nu$ on $(\Omega,\mathcal{F})$ is called an exponential family if and only if
\begin{equation}\label{eq.exp}
\frac{dP_\theta}{d\nu}(\omega)=exp\{[\eta(\theta)]^{\tau}T(\omega)-\xi(\theta)\}h(\omega),\ \omega\in\Omega,
\end{equation}
where $T$ is a random $p$-vector with $p\in{\N}$, $\eta$ is a function from $\Theta$ to $\R^p$, $h$ is a non-negative Borel function and
$\xi(\theta)=log\{\int_{\Omega} exp\{[\eta(\theta)]^{\tau}T(\omega)\}h(\omega)d\nu(\omega)$. For a detailed account about exponential families, please reference to \citet{Jorgensen12}.

Note that the Definition of exponential family above depends on the measure $\nu$. Then, if we change the measure that will dominate the family $\mathcal{P}$, the representation given in \eqref{eq.exp} will be different. Thus, it is natural to ask if the exponential representation is independent of the choice of the dominating measure, i.e., if $\mathcal{P}$ is dominated by a $\sigma$-finite measure $\mu$, then $\frac{dP_\theta}{d\mu}$ has the form given in \eqref{eq.exp}. Before, answering this question, though, we state the following result related to exponential families.

\begin{prop}\label{EXPF1}
For any $A\in\mathcal{F}$, define $\lambda(A)=\int_A h d\nu$, for $h$ as defined in \eqref{eq.exp}. Then, $\lambda$ is a $\sigma$-finite measure on $(\Omega,\mathcal{F})$ and $\mathcal{P}<<\lambda$. Furthermore, $\ds\frac{dP_\theta}{d\lambda}(\omega)=exp\{[\eta(\theta)]^{\tau}T(\omega)-\xi(\theta)\},\ \omega\in\Omega$.
\end{prop}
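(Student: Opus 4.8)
The plan is to verify the three assertions in turn, the crux being that the set function $\lambda$ inherits $\sigma$-finiteness from $\nu$, and that $h$ plays the role of the density $d\lambda/d\nu$, so that computing $dP_\theta/d\lambda$ reduces to the standard substitution rule for Radon-Nikod\'{y}m derivatives.

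First I would check that $\lambda$ is a measure. Non-negativity of $h$ together with $\lambda(A)=\int_A h\,d\nu$ gives $\lambda(\emptyset)=0$ and countable additivity directly from the Monotone Convergence Theorem applied to the partial sums $\sum_{k=1}^{n} h\,\mathbb{I}_{A_k}$ for disjoint $\{A_k\}$. For $\sigma$-finiteness, the subtle point is that $\nu$ being $\sigma$-finite is not by itself enough, since $h$ may be unbounded; so I would refine a $\sigma$-finite cover. Write $\Omega=\bigcup_n \Omega_n$ with $\nu(\Omega_n)<\infty$ and set $\Omega_{n,k}=\Omega_n\cap\{h\le k\}$ for $n,k\in\N$. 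Each such set satisfies $\lambda(\Omega_{n,k})=\int_{\Omega_{n,k}} h\,d\nu\le k\,\nu(\Omega_n)<\infty$, and since $h$ is real-valued (hence finite) everywhere, $\bigcup_k \Omega_{n,k}=\Omega_n$, whence $\bigcup_{n,k}\Omega_{n,k}=\Omega$. This exhibits $\lambda$ as $\sigma$-finite.

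Next, for absolute continuity and the explicit derivative, I would invoke the elementary substitution identity: because $h$ is, by construction, a version of $d\lambda/d\nu$, for every non-negative Borel function $g$ one has $\int_A g\,d\lambda=\int_A g\,h\,d\nu$ for all $A\in\mathcal{F}$ (proved first for indicators, then simple functions, then general non-negative $g$ by Monotone Convergence). Applying this to the strictly positive function $g_\theta(\omega)=\exp\{[\eta(\theta)]^{\tau}T(\omega)-\xi(\theta)\}$ and using the hypothesised exponential representation \eqref{eq.exp} of $dP_\theta/d\nu$ gives, for every $A\in\mathcal{F}$,
\begin{equation*}
\int_A g_\theta\,d\lambda=\int_A g_\theta\,h\,d\nu=\int_A \frac{dP_\theta}{d\nu}\,d\nu=P_\theta(A).
\end{equation*}
This single chain of equalities settles both remaining claims at once: it shows $P_\theta(A)=\int_A g_\theta\,d\lambda$, so $P_\theta<<\lambda$ with $dP_\theta/d\lambda=g_\theta$ for each $\theta$, and hence $\mathcal{P}<<\lambda$. (Absolute continuity can alternatively be seen directly: if $\lambda(A)=0$ then $h=0$ $\nu$-a.e. on $A$, so the integrand $g_\theta h$ vanishes $\nu$-a.e. on $A$ and therefore $P_\theta(A)=0$.)

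The only step that is not completely automatic is the $\sigma$-finiteness of $\lambda$, and I expect that to be the main obstacle; it is handled by the level-set refinement above. The remaining assertions are routine consequences of the density-substitution rule for Radon-Nikod\'{y}m derivatives, noting that all integrands involved are non-negative so no integrability caveats arise.
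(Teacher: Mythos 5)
Your proof is correct, and it is both more complete than the paper's and different in route at the key step. The paper's own proof addresses only absolute continuity and the density formula: it fixes $B=\{h>0\}$, argues that $\lambda(A)=0$ forces $\nu(A\cap B)=0$ and hence $P_\theta(A)=P_\theta(A\cap B)=0$ (this is exactly your parenthetical alternative argument), and then simply cites the Radon--Nikod\'{y}m chain rule for the formula $dP_\theta/d\lambda=\exp\{[\eta(\theta)]^{\tau}T-\xi(\theta)\}$. It never verifies that $\lambda$ is a measure, nor that it is $\sigma$-finite, even though both are part of the statement and $\sigma$-finiteness is implicitly needed for the chain-rule step, since the existence of $dP_\theta/d\lambda$ via the Radon--Nikod\'{y}m theorem presupposes it; your level-set refinement $\Omega_{n,k}=\Omega_n\cap\{h\le k\}$ closes that gap cleanly, and you are right that it is the one point requiring an actual idea, because $\int_{\Omega_n}h\,d\nu$ may be infinite. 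Your main route to the remaining claims also differs in an instructive way: instead of first proving $\mathcal{P}<<\lambda$ and then invoking the chain rule (which identifies $dP_\theta/d\lambda$ with $g_\theta$ only $\lambda$-a.e., by cancelling $h$ on $\{h>0\}$), you establish the identity $P_\theta(A)=\int_A g_\theta\,d\lambda$ for all $A\in\mathcal{F}$ directly from the substitution rule $\int_A g\,d\lambda=\int_A gh\,d\nu$; this constructively exhibits $g_\theta$ as a density of $P_\theta$ with respect to $\lambda$, yielding absolute continuity as a by-product and bypassing the Radon--Nikod\'{y}m existence theorem entirely. The paper's argument is shorter; yours is self-contained and actually proves the full statement as claimed.
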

\begin{proof}
Let $B=\{h>0\}$ and $\lambda(A)=0$ for some $A\in\mathcal{F}$. Then, $\lambda(A)=\lambda(A\cap B)$ and $\int_{A\cap B} h d\nu=0$. Since the function $h$ is strictly positive on $A\cap B$, it follows that $\nu(A\cap B)=0$ and $P_\theta(A)=P_\theta(A\cap B)=0$ for all $\theta\in\Theta$. The expression for $\frac{dP_\theta}{d\lambda}$ follows from the RN chain rule.
\end{proof}

We now move to the main result about exponential families.
\begin{teo}\label{teo.exp}
Being an Exponential family is a property of the model $\mathcal{P}$, i.e., it is independent of the dominating measure $\nu$ used in (\ref{eq.exp}). Moreover, if $\mathcal{P}$ is an Exponential family, then, for all $\sigma$-finite measure $\nu$ such that $\mathcal{P}<<\nu$, there exist functions $\eta$, $T$ and $\xi$ and there exists a measurable function $h_\nu$ such that
$$\frac{dP_\theta}{d\nu}(\omega)=exp\{[\eta(\theta)]^{\tau}T(\omega)-\xi(\theta)\}h_\nu(\omega),\ \omega\in\Omega, \forall\theta\in\Theta.$$
\end{teo}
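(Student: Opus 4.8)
The plan is to establish the ``Moreover'' part first and then read off the first assertion from it. Write the given representation (\ref{eq.exp}) with respect to the measure appearing in the definition, which I rename $\nu_0$ to free up the symbol $\nu$, and let $\mu$ be an arbitrary $\sigma$-finite measure with $\mathcal{P}<<\mu$; renaming $\mu$ back to $\nu$ and $h_\mu$ to $h_\nu$ at the end recovers the stated formula. The key idea is that the exponential kernel $\exp\{[\eta(\theta)]^{\tau}T(\omega)-\xi(\theta)\}$ should be left entirely untouched, with the whole effect of changing the dominating measure absorbed into a single $\theta$-free factor via the Radon--Nikod\'{y}m chain rule. Since the same $\eta$, $T$ and $\xi$ will then serve for every $\mu$, this simultaneously shows that being exponential is a property of the model and not of the dominating measure, which is precisely the first assertion.

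The main obstacle is that $\nu_0$ and $\mu$ need not be comparable on the whole of $\Omega$ (neither $\nu_0<<\mu$ nor $\mu<<\nu_0$ need hold), so the chain rule cannot be invoked globally. This is exactly the difficulty resolved by Lemma \ref{theo.2}: applying it to the triple $(\mathcal{P},\nu_0,\mu)$ produces a measurable set $A$ with $P_\theta(A)=1$ for all $\theta\in\Theta$ and with $\nu_0\big|_A$ and $\mu\big|_A$ equivalent. On $(A,\mathcal{F}(A))$ all three restricted measures are $\sigma$-finite and satisfy $P_\theta\big|_A<<\nu_0\big|_A<<\mu\big|_A$, so the chain rule applies and yields, $\mu\big|_A$-a.s.,
$$\frac{dP_\theta\big|_A}{d\mu\big|_A}(\omega)=\frac{dP_\theta\big|_A}{d\nu_0\big|_A}(\omega)\,\frac{d\nu_0\big|_A}{d\mu\big|_A}(\omega)=\exp\{[\eta(\theta)]^{\tau}T(\omega)-\xi(\theta)\}\,h(\omega)\,\frac{d\nu_0\big|_A}{d\mu\big|_A}(\omega),$$
using that $\frac{dP_\theta|_A}{d\nu_0|_A}$ is the restriction of the density (\ref{eq.exp}) to $A$. (Alternatively, one could first pass to the measure $\lambda$ of Proposition \ref{EXPF1} to absorb $h$, but this is not needed here.)

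It then remains to set $h_\mu(\omega)=h(\omega)\,\frac{d\nu_0|_A}{d\mu|_A}(\omega)$ for $\omega\in A$ and to extend it, say by $h_\mu\equiv 0$, on $A^c$ (a measurable, $\theta$-free function), and to verify that suitable versions make the identity hold for \emph{every} $\omega\in\Omega$. On $A$ the display above is the claim. On $A^c$ one has $P_\theta(A^c)=0$, whence $\int_{A^c}\frac{dP_\theta}{d\mu}\,d\mu=0$ and, the integrand being non-negative, $\frac{dP_\theta}{d\mu}=0$ $\mu$-a.s. on $A^c$; so I may choose the version that vanishes there, which equals $\exp\{[\eta(\theta)]^{\tau}T-\xi(\theta)\}\,h_\mu$ since $h_\mu\equiv0$ on $A^c$. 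Finally, I would check that the same $\xi$ is automatically the correct normaliser: integrating the resulting identity against $\mu$ and using $P_\theta(\Omega)=1$ forces $\xi(\theta)=\log\{\int_\Omega \exp\{[\eta(\theta)]^{\tau}T\}\,h_\mu\,d\mu\}$, the exact form demanded by (\ref{eq.exp}). The first assertion of the theorem follows at once, because $\eta$, $T$ and $\xi$ were carried over unchanged.
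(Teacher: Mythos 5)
Your proof is correct, but it takes a genuinely different route from the paper's. The paper never restricts to a subset: it routes both dominating measures through the minimal dominating measure $Q=\sum_i c_iP_{\theta_i}$ of Lemma \ref{lema1}, using the chain rule twice. First it transfers the representation from the original measure to $Q$ by dividing out $q=dQ/d\nu$ (so that $b_\theta=\exp\{[\eta(\theta)]^{\tau}T-\xi(\theta)\}(h_{\nu}/q)$ is a version of $dP_\theta/dQ$), and then, since minimality gives $Q<<\mu$ for \emph{every} dominating $\mu$, it multiplies by $s=dQ/d\mu$ to land at $h_\mu=(h_{\nu}/q)\,s$. You instead invoke Lemma \ref{theo.2} to produce a set $A$ of full $P_\theta$-probability on which $\nu_0|_A$ and $\mu|_A$ are equivalent, apply the chain rule there, and patch with $h_\mu\equiv 0$ on $A^c$; your integral verification that the patched function is a global version of $dP_\theta/d\mu$ is exactly what is needed, and your closing check that the same $\xi$ remains the correct normaliser is a point the paper leaves implicit. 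The two routes are close cousins --- the proof of Lemma \ref{theo.2} itself rests on Lemma \ref{lema1} --- but yours avoids the division by $q$, which in the paper requires the slightly delicate ``without loss of generality $q>0$'' step (a version of $dQ/d\nu$ can vanish on a $\nu$-positive set, precisely where all the $P_\theta$'s vanish), at the cost of the restriction-and-extension bookkeeping. Both arguments deliver the same conclusion at the same level of generality, with $\eta$, $T$ and $\xi$ carried over unchanged, which is what makes the first assertion of the theorem immediate in either case.
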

\begin{proof}
See Appendix.
\end{proof}

\subsection{Missing data problems}\label{subsecMD}

Consider a statistical model $\mathcal{P}=\{P_{\theta};\ \theta\in\Theta\}$ on $(\Omega,\mathcal{F})$, such that $\Omega=\Omega_1\times\Omega_2$ and $\mathcal{F}=\sigma(\mathcal{F}_1\times\mathcal{F}_2)$. Suppose, however, that only $\omega_1\in\Omega_1$ is observed. This is the general formulation of a statistical missing data problem and may be motivated by modelling reasons and/or because the marginal density of $P_{\theta}$ (w.r.t. some dominating measure) on $(\Omega_1,\mathcal{F}_1)$ is not available but the joint density on $(\Omega,\mathcal{F})$ is \citep[see, for example,][]{G&G}. A likelihood-based inference approach considers the (pseudo-)likelihood, which is obtained from the density of $P_{\theta}$ w.r.t. some dominating measure, and integrates out the missing data somehow. This is typically done via EM (or Monte Carlo EM) under the frequentist approach or via MCMC under the Bayesian approach. Both methodologies involve dealing with the conditional measure of the missing data given the data $\omega_1$ and the parameters $\theta$.

Suppose that two dominating measures $\nu_1$ and $\nu_2$ for $\mathcal{P}$ are available. Each of them may then be used to obtain a RN derivative for measures $P_{\theta}$ and, consequently, a (pseudo-)likelihood. Supposing that $\omega_1$ is observed, we have
\begin{equation}
\ds  \pi_{\theta,i}(\omega_2|\omega_1) \propto \pi_{\theta,i}(\omega_1,\omega_2),\;\;i=1,2,
\end{equation}
where the right-hand side is the RN derivative of $P_{\theta}$ w.r.t. $\nu_i$. This way, the left-hand side is the density of the conditional measure of the missing data given the data w.r.t. some dominating measure which is induced by $\nu_i$ and, therefore, may be different for $\nu_1$ and $\nu_2$.

Theorem \ref{main.theo} guarantees that the (pseudo-)likelihood is proportional w.r.t. $\theta$ only - not w.r.t. $\omega_2$, which also needs to be estimated (dealt with). As a consequence, although both measures can be used, this choice may have great influence when devising the inference methodology.
The EM algorithm requires computing an expectation w.r.t. the conditional measure of the missing data whilst the Monte Carlo EM and the MCMC require sampling from this measure. If the conditional densities $\ds \pi_{\theta,i}(\omega_2|\omega_1)$ are different for $i=1$ and $i=2$, it may be the case that the required tasks are harder or even not feasible for one of them - although both densities are valid.

\subsection{Poisson processes}\label{subsecPP}

The Poisson process (PP) is the most common statistical model to fit point pattern data. Consider some region $S\subset\mathds{R}^d$, for $d\in\mathds{N}$ - Poisson processes can actually be defined in more general measurable spaces \citep[see][Chp. 2]{King}. Consider a PP on $S$ with intensity $\lambda:=\{\lambda(s)\geq0,\;\forall s\in S\}$, which defines a probability measure $P_{\lambda}$. In this case, we have two obvious dominating measures for $P_{\lambda}$. The first one represents a realisation $\omega$ as $(N,s_1,\ldots,s_N)$, where $N$ is the number of points and the $s_j$'s are their respective locations. We can factor their joint density as $\pi(N)\pi(s_1,\ldots,s_N|N)$ and use the measure $\nu_3=\sum_{k=0}^{\infty}\nu_{3,k}$, where $\nu_{3,k}=\nu_1\otimes\nu_{2}^k$, as a dominating measure, where $\nu_1$ is the counting measure and $\nu_{2}^k$ is the $k$-dimensional Lebesgue measure. We get that
\begin{equation}\label{LhPP1}
\ds \frac{dP_{\lambda}}{d\nu_3}(\omega)=\frac{1}{N!}\exp\left\{-\int_S \lambda(s)ds\right\}\left(\int_S \lambda(s)ds\right)^N\prod_{j=1}^N\left(\frac{\lambda(s_j)}{\int_S \lambda(s)ds}\right).
\end{equation}

Another valid dominating measure is the probability measure $\nu$ of any PP for which the intensity function is positive everywhere in $S$, in particular constant and equals to 1. In that case, the RN derivative is given by Jacod's formula \citep[see][Corollary II.7.3]{ander93}:
\begin{equation}\label{LhPP2}
\ds \frac{dP_{\lambda}}{d\nu}(\omega)=\exp\left\{-\int_S (\lambda(s)-1) ds\right\}\prod_{j=1}^N(\lambda(s_j)/1).
\end{equation}

Note that the densities in (\ref{LhPP1}) and (\ref{LhPP2}) are proportional in $\lambda$. In a standard inference problem where $\omega$ is observed and $\lambda$ is to be estimated, there is no (practical) difference in considering one or the other. In a more complex context, however, it may be a crucial choice, for example, if the process is not fully observed - see Section \ref{subsecMD}.

If $S\subset\mathds{R}$ and we consider the Skorokhod space $D$ of càdlàg functions with the respective Skorokhod topology, we get that the density in (\ref{LhPP2}) is continuous in $D$ and this is a separable space.

\subsection{Diffusion processes}

Brownian motion driven stochastic differential equations (SDE), known as diffusion processes, are quite popular in the statistical literature to model a variety of continuous time phenomena. Formally, a diffusion is defined as the continuous time stochastic process which is the (unique) solution of a (well-defined) SDE. Making statistical inference for diffusions is a challenging problem due to the complex nature of such processes. The continuous time feature implies that they lie on infinite-dimensional spaces and typically have unknown (intractable) transition densities. As a consequence, an exact likelihood in a discretely observed context is unavailable. The most promising solutions available stand out for treating the inference problem without resorting to discretisation schemes \citep[see][]{bpr06a}. These methodologies, called exact, rely on the (pseudo-)likelihood function of a continuous-time trajectory and give rise to interesting issues related to the context of this paper. We discuss the case where the processes are univariate and the diffusion process $Y:=\{Y_s,\;s\in[0,t]\}$ is defined as the solution for a SDE of the type:
\begin{equation}\label{SDE1}
\ds dY_s=a(Y_s,\theta)ds+\sigma(Y_s,\theta)dW_s,\;s\in[0,t],\;\;Y_0=y_0,
\end{equation}
where $W_s$ is a Brownian motion and functions $a$ and $\sigma$ are supposed to satisfy some regularity conditions to guarantee the existence of an unique solution \citep[see][]{KP95}. Diffusion processes trajectories' are a.s. continuous and non-differentiable everywhere.

In a typical statistical problem, one is interested in estimating the functions $a(Y_s,\theta)$ and $\sigma(Y_s,\theta)$. These are typically defined parametrically, as it is done here, but non-parametric approaches may be considered. In the parametric case, the aim is to estimate the parameter set $\theta$. As it was mentioned above, exact methodologies rely on the likelihood of a complete trajectory which can only be obtained if a valid dominating measure is available. It turns
out, however, that processes with distinct diffusion coefficient $\sigma$ define mutually singular probability measures. As a consequence, there exists no $\sigma$-finite measure that simultaneously dominates the family of probability measures if this is uncountable, which is often the case (if it is countable, a countable sum of measures would dominate - see \cite{Raf}).

Therefore, different values of $\theta$ define mutually singular measures and no likelihood function can be obtained. The solution for this problem considers two transformations of the diffusion path - proposed in \cite{stramer} in a discrete approximation context. A complete path is decomposed as $(Y_{obs},\dot{X})$, where $Y_{obs}$ are the discrete observations of $Y$ and $\dot{X}$ are transformed bridges between the observations. More specifically, for (time-ordered) observations $y_0,\ldots,y_n$ at times $t_0,t_1,\ldots,t_n$, consider the Lamperti transform $X_s=\eta(Y_s,\theta)=\int_{y}^{X_s}\frac{1}{\sigma(u,\theta)}du$, for some element $y$ of the state space of $Y$. This implies that $X$ is the solution of a SDE with unit diffusion coefficient and some drift $\alpha(X_s,\theta)$ (which depends on functions $a$ and  $\sigma$). Now, defining $x_i(\theta)=\eta(y_i,\theta)$, $i=0,\ldots,n$, consider the following transformation of the bridges of $X$ between the $x_i(\theta)$ points, $\dot{X_s}=\varphi^{-1}(X_s)= X_s -\left(1-\frac{s-t_{i-1}}{t_i-t_{i-1}}\right)x_{i-1}(\theta)-\left(\frac{s-t_{i-1}}{t_i-t_{i-1}}\right)x_{i}(\theta)$, for $s\in(t_{i-1},t_i)$. This implies that the transformed bridges start and end in zero and are, therefore, dominated by the measure of standard Brownian bridges. The density of $(Y_{obs},\dot{X})$ is decomposed as $\pi(Y_{obs},\dot{X})=\pi(Y_{obs})\pi(\dot{X}|Y_{obs})$ and obtained w.r.t. to the parameter-free dominating measure $\nu^n\otimes\mathbb{W}^n$ - the product measure of the $n$-dimensional Lebesgue measure and the product measure of standard Brownian bridges of respective time lengths. Lemma 2 from \citet{bpr06a} gives that:
\begin{eqnarray}\label{GT1}
\ds && \pi(Y_{obs},\dot{X})=\prod_{i=1}^n\eta'(y_i;\theta)\phi\left((x_i(\theta)-x_{i-1}(\theta))/\sqrt{t_i-t_{i-1}}\right)  \nonumber\\
&&\exp\left\{\Delta A(x_0(\theta),x_n(\theta);\theta)-\int_{0}^T\left(\frac{\alpha^2+\alpha'}{2}\right)(\varphi_{\theta}(\dot{X}_s);\theta)ds\right\},
\end{eqnarray}
where $\Delta A(x_0(\theta),x_n(\theta);\theta)=A(x_n(\theta);\theta)-A(x_0(\theta);\theta)$, $A(u;\theta)=\int_0^u\alpha(z,\theta)dz$ and $\phi$ is the standard Gaussian density.

Assuming that $\sigma$ is continuously differentiable, one can show that, under the supremum norm, the density in (\ref{GT1}) is continuous in $C$ - the space of continuous functions on $[0,t]$. This ($\sup$ norm on $C$) also defines a separable space.

\section{Final remarks}

In this paper, we discussed some mathematical foundations of Likelihood Theory, more specifically, the definition of likelihood function (in both parametric and non-parametric contexts). We consider the general definition of likelihood function in terms of the Radon-Nikod\'{y}m derivative of each probability measure in the model w.r.t. any dominating measure, evaluated at the observed sample. The Likelihood Proportionality Theorem validates this definition in terms of the Likelihood Principle by guaranteeing the existence of versions of the densities that are a.s. (under every probability measure in the model) proportional for any two dominating measures.

Whilst the Likelihood Proportionality Theorem only guarantees the existence of versions that are proportional, a practical strategy to find such versions is provided by considering continuous versions of densities. Under some mild conditions, continuous versions are shown to always be a.s. proportional and, in many cases, unique \citep[][]{Picc82}. Namely, the use of continuous versions will always be in accordance with the Likelihood Principle. The prior predictive measure is also discussed as a potential choice for dominating measure.

The decision of which dominating measure to use is particularly interesting in cases where there exists no or more than one obvious choice. Both cases are illustrated and discussed in the examples presented in Section \ref{secexamp}. In particular, we present appealing versions of RN derivatives and discuss how different choices, although leading to the same result, may have an influence in the complexity of the inference process.

\section*{Acknowledgements}

The authors would like to thank Gareth Roberts for insightful and stimulating discussions about the paper. The first author would like to thank CNPq-Brasil and FAPEMIG for financial support.

\bibliographystyle{apalike}%chicago, harvard
\bibliography{biblio}

%\newpage

\section*{Appendix A - Important results and definitions}

We consider the following definitions and results from \cite{Picc82}.

\begin{definition*}
A measure $\nu$ defined on $(\Omega,\mathcal{F})$ is said to be locally finite (LF) if for every point $\omega\in\Omega$ there exists a neighbourhood $U_\omega$ such that $\nu(U_\omega)<\infty$.
\end{definition*}

\begin{thm*}
Any LF measure on $(\Omega,\mathcal{F})$ is $\sigma$-finite.
\end{thm*}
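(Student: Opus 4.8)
The plan is to exploit the topological structure of $\Omega$. Recall that throughout this setting $\Omega$ is a separable metric space and $\mathcal{F}$ is its Borel $\sigma$-algebra, so $\Omega$ is second countable and hence Lindel\"of (every open cover admits a countable subcover). The idea is to turn the pointwise finiteness hypothesis into an open cover of $\Omega$ by sets of finite measure, extract a countable subcover, and read off $\sigma$-finiteness directly.

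First I would upgrade the neighbourhoods in the definition to open sets. By hypothesis, for each $\omega\in\Omega$ there is a neighbourhood $U_\omega$ with $\nu(U_\omega)<\infty$; by definition of neighbourhood it contains an open set $V_\omega$ with $\omega\in V_\omega\subset U_\omega$, and monotonicity of $\nu$ gives $\nu(V_\omega)\le\nu(U_\omega)<\infty$. The family $\{V_\omega;\ \omega\in\Omega\}$ is then an open cover of $\Omega$ by sets of finite $\nu$-measure. Next I would invoke the Lindel\"of property: since $\Omega$ is second countable, this cover has a countable subcover $\{V_{\omega_n}\}_{n=1}^{\infty}$, so $\Omega=\bigcup_{n=1}^{\infty}V_{\omega_n}$ with $\nu(V_{\omega_n})<\infty$ for every $n$, which is exactly the statement that $\nu$ is $\sigma$-finite.

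The only real content is the passage from local finiteness to a countable cover, i.e. the Lindel\"of property, and this is where separability is essential. If one prefers to avoid citing it, I would spell it out: fix a countable base $\{B_j\}_{j\in\N}$ of $\Omega$ (available by separability, e.g. the balls of rational radius about a countable dense set), let $J=\{j\in\N;\ B_j\subset V_\omega \text{ for some }\omega\}$, and for each $j\in J$ choose one $W_j:=V_\omega$ with $B_j\subset W_j$. Given any $\omega\in\Omega$, the base property yields some $B_j$ with $\omega\in B_j\subset V_\omega$, whence $j\in J$ and $\omega\in W_j$; thus $\{W_j\}_{j\in J}$ is a countable subcover by sets of finite measure and the argument closes as above.

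I expect no genuine obstacle beyond correctly invoking second countability, since the measure-theoretic step is a one-line monotonicity estimate. The one subtlety worth flagging is the interpretation of ``neighbourhood'': the reduction to \emph{open} sets in the first step is what makes the cover amenable to the Lindel\"of argument, and it is harmless precisely because $\nu$ is monotone.
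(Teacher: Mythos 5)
Your proof is correct. Note that the paper itself offers no proof of this statement: it is one of the results imported verbatim from \cite{Picc82} in Appendix A, so there is no in-paper argument to compare against. Your Lindel\"of argument is the standard (and essentially the only) proof of this fact, and it is carried out cleanly: the reduction from neighbourhoods to open subsets is legitimate because open sets are Borel and $\nu$ is monotone, and the extraction of a countable subcover via a countable base is spelled out correctly. You are also right to flag separability as the essential hypothesis rather than a convenience: the statement fails without it (e.g.\ counting measure on an uncountable discrete space is locally finite but not $\sigma$-finite), so the fact that Section \ref{seccontv} and Appendix A work throughout with a separable metric $\Omega$ and its Borel $\sigma$-algebra is exactly what makes the theorem true in the paper's setting.
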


\begin{definition*}
A point $\omega\in\Omega$ is called impossible for the measure $\nu$ on $(\Omega,\mathcal{F})$ if there exists a measurable (open) neighbourhood $U$ of $\omega$ such that $\nu(U)=0$. The set of the points of $\Omega$ which are not impossible for $\nu$ is called its support and it will be denoted by $S_\nu$.
\end{definition*}

\begin{proposition*}
The support of any LF measure on $(\Omega,\mathcal{F})$ is not empty.
\end{proposition*}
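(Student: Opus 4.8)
The plan is to argue by contradiction, the essential ingredient being that the separable metric structure of $\Omega$ (fixed throughout Section \ref{seccontv}, which is the setting in which these auxiliary results are used) makes $\Omega$ a Lindel\"of space. First I would suppose, towards a contradiction, that $S_\nu=\emptyset$. By the definition of the support, this means that \emph{every} point $\omega\in\Omega$ is impossible for $\nu$: for each $\omega$ there is an open neighbourhood $U_\omega$ of $\omega$ with $\nu(U_\omega)=0$. The family $\{U_\omega;\ \omega\in\Omega\}$ is then an open cover of $\Omega$.

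Next I would invoke separability. A separable metric space admits a countable base and is therefore Lindel\"of, so the open cover $\{U_\omega\}$ admits a countable subcover $\{U_{\omega_n}\}_{n=1}^{\infty}$ with $\Omega=\bigcup_{n=1}^{\infty}U_{\omega_n}$. Countable subadditivity of $\nu$ then gives
\[
\nu(\Omega)\le\sum_{n=1}^{\infty}\nu(U_{\omega_n})=0,
\]
so $\nu$ is the null measure. Since any measure used for domination is non-trivial (in particular $\nu(\Omega)>0$ for an LF measure dominating a non-empty family of probability measures), this is the required contradiction, and hence $S_\nu\neq\emptyset$.

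The one delicate step is the reduction to a countable subcover, and this is precisely where separability cannot be dispensed with: on a non-separable space one can cover the whole space by null open sets of a nonzero measure (for instance, an atomless measure on an uncountable set carrying the discrete topology), so the support may genuinely be empty there. It is worth recording that the same argument yields slightly more --- namely that $S_\nu^{\,c}$ is always an open $\nu$-null set, so the support in fact carries the full mass of $\nu$ --- and that local finiteness is not actually used for this particular conclusion, only for the accompanying $\sigma$-finiteness and uniqueness results in this appendix.
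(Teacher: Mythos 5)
Your proof is correct, but there is nothing in the paper to compare it against: this proposition is one of the results in Appendix~A that the authors import from \cite{Picc82} and state \emph{without proof}. Your argument --- separable metric implies second countable, hence Lindel\"{o}f; if every point were impossible, the cover of $\Omega$ by $\nu$-null open neighbourhoods would admit a countable subcover, whence $\nu(\Omega)=0$ by countable subadditivity --- is the standard one, and it is valid precisely under the standing assumptions of Section~\ref{seccontv}, which is the setting in which the appendix results are used. Three fine points. First, as you yourself flag, the statement as literally written is false for the zero measure (which is LF and has empty support), so the non-degeneracy hypothesis $\nu(\Omega)>0$ that you invoke at the end is a genuine, if implicit, assumption of the quoted result and not mere bookkeeping; in the paper's applications it is automatic, since a dominating measure for a non-empty family of probability measures cannot be null. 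Second, your parenthetical counterexample for the non-separable case is shakier than the rest: a \emph{finite} (or $\sigma$-finite) measure on the Borel $\sigma$-algebra of an uncountable discrete space vanishing on all singletons cannot be exhibited in ZFC --- that is Ulam's problem on real-valued measurable cardinals. A correct ZFC example is the measure $\nu(A)=0$ for $A$ countable and $\nu(A)=\infty$ otherwise: it is locally finite (each singleton is a null open neighbourhood), non-zero, and has empty support; incidentally it is not $\sigma$-finite, so it also shows that the companion theorem of Appendix~A (``any LF measure is $\sigma$-finite'') fails without separability. Third, your closing observations --- that the same Lindel\"{o}f argument yields $\nu(S_\nu^c)=0$, recovering the subsequent theorem that the support carries full mass, and that local finiteness plays no role in this particular proposition --- are both correct.
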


\begin{thm*}
The support of a LF measure $\nu$ on $(\Omega,\mathcal{F})$ is a closed set with measure $\nu(\Omega)$.
\end{thm*}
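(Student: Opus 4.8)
The plan is to prove the two assertions separately: first that $S_\nu$ is closed, and then that $\nu(S_\nu^c)=0$, which immediately yields $\nu(S_\nu)=\nu(\Omega)$. The starting observation is simply to unwind the definition of support: $S_\nu^c$ is exactly the set of points that are \emph{impossible} for $\nu$, i.e. points admitting an open neighbourhood of $\nu$-measure zero.

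For closedness I would argue directly, with no use of countability. Suppose $\omega\in S_\nu^c$ and let $U$ be an open neighbourhood of $\omega$ with $\nu(U)=0$. Then $U$ is an open neighbourhood of \emph{each} of its points $\omega'$, and since $\nu(U)=0$, every such $\omega'$ is also impossible; hence $U\subset S_\nu^c$. As every point of $S_\nu^c$ has an open neighbourhood contained in $S_\nu^c$, the set $S_\nu^c$ is open and therefore $S_\nu$ is closed.

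The substantive part is showing $\nu(S_\nu^c)=0$, and this is where the main obstacle lies: $S_\nu^c$ is a union of open null neighbourhoods indexed by its (possibly uncountably many) points, and an uncountable union of null sets need not be null. I would overcome this by passing to a countable base. Since $\Omega$ is a separable metric space (the standing assumption in this part of the paper), it is second countable; I would fix a countable base $\{B_n\}_{n\in\N}$ and set $N=\bigcup\{B_n:\nu(B_n)=0\}$. I then claim $N=S_\nu^c$. The inclusion $N\subset S_\nu^c$ is immediate, since each such $B_n$ is an open null neighbourhood of every point it contains. For the reverse inclusion, take $\omega\in S_\nu^c$ with witness neighbourhood $U$; because $\{B_n\}$ is a base there is some $B_n$ with $\omega\in B_n\subset U$, whence $\nu(B_n)\le\nu(U)=0$ and $\omega\in N$. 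Finally, $N$ is a countable union of $\nu$-null sets, so by countable subadditivity $\nu(S_\nu^c)=\nu(N)\le\sum_n\nu(B_n)=0$, and therefore $\nu(S_\nu)=\nu(\Omega)$. Note that local finiteness plays no essential role in this particular statement; the genuinely essential ingredient is the existence of a countable base, which is what lets us reduce the uncountable covering to a countable one.
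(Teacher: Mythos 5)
Your proof is correct. Note that there is nothing in the paper to compare it against: this statement is quoted in Appendix A as a known result from \cite{Picc82}, with no proof supplied, so your argument fills a genuine gap rather than duplicating one. What you give is the standard argument, and it is sound on both counts: $S_\nu^c$ is open because any witness null neighbourhood of an impossible point consists entirely of impossible points, and $\nu(S_\nu^c)=0$ follows by replacing the possibly uncountable cover of $S_\nu^c$ by null open sets with a countable subfamily drawn from a countable base, which exists because the standing assumption of Section 3 makes $\Omega$ a separable (hence second countable) metric space. Your closing remark is also accurate and worth keeping: local finiteness of $\nu$ plays no role in this particular statement, whereas second countability is essential --- on spaces without a countable base the set of impossible points can fail to be null (the classical Dieudonn\'{e}-type examples), which is precisely why the paper restricts to metric separable sample spaces in this part of the text.
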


\begin{prop}\label{TCVLF}
Let $\nu$ and $\mu$ be measures on $(\Omega,\mathcal{F})$ and let $S_\nu$ and $S_\mu$ the supports of $\nu$ and $\mu$, respectively. If $\nu<<\mu$, then $S_\nu\subset S_\mu$.
\end{prop}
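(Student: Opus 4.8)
The plan is to argue by contraposition, translating the set inclusion $S_\nu\subset S_\mu$ into the statement that every point which is impossible for $\mu$ is also impossible for $\nu$. Recalling the definitions from the excerpt, $\omega\in S_\nu$ means $\omega$ is not impossible for $\nu$, i.e. every measurable (open) neighbourhood $U$ of $\omega$ satisfies $\nu(U)>0$; and $\omega\notin S_\mu$ means $\omega$ is impossible for $\mu$, i.e. there is a measurable (open) neighbourhood $U$ of $\omega$ with $\mu(U)=0$. So I would restate the goal as: $\Omega\setminus S_\mu\subset\Omega\setminus S_\nu$.

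First I would take an arbitrary $\omega\in\Omega\setminus S_\mu$ and invoke the definition of impossibility to produce a measurable (open) neighbourhood $U$ of $\omega$ with $\mu(U)=0$. Then I would apply the hypothesis $\nu<<\mu$ directly to this fixed set $U$: absolute continuity gives $\mu(U)=0\Rightarrow\nu(U)=0$. The very same $U$ is therefore a measurable (open) neighbourhood of $\omega$ on which $\nu$ vanishes, which is exactly the assertion that $\omega$ is impossible for $\nu$, i.e. $\omega\in\Omega\setminus S_\nu$.

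Since $\omega$ was arbitrary, this establishes $\Omega\setminus S_\mu\subset\Omega\setminus S_\nu$, and taking complements yields $S_\nu\subset S_\mu$, as required. The argument is essentially a one-line consequence of the definitions once absolute continuity is applied to the single witnessing neighbourhood, so I do not expect a genuine obstacle. The only point requiring mild care is consistency in the notion of neighbourhood used in the definition of support: the excerpt parenthesises ``(open)'', so I would make sure to reuse precisely the same class of neighbourhoods for both $\mu$ and $\nu$, noting that the witnessing set $U$ carries over verbatim and no additional measurability or openness needs to be produced. Because the implication only ever uses $U$ as a single measurable set fed into the definition of $\nu<<\mu$, the closedness of the supports (established in the preceding theorem) plays no role here and need not be invoked.
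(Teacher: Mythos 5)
Your proof is correct and is essentially identical to the paper's: both take $\omega\notin S_\mu$, use the witnessing open neighbourhood $U$ with $\mu(U)=0$, apply $\nu<<\mu$ to get $\nu(U)=0$, and conclude $S_\mu^c\subset S_\nu^c$. No gaps; nothing further is needed.
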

\begin{proof}
For any $\omega\notin S_\mu$, there exists an open set $U_\omega$ such that $\mu(U_\omega)=0$. Because $\nu<<\mu$, it follows that $\nu(U_\omega)=0$. Then, $\omega \notin S_\nu$ and $S_\mu^c\subset S_\nu^c$.
\end{proof}

The following result from \citet{Picc82} guarantees the uniqueness of continuous versions of densities under some mild conditions.

\begin{thm*}
Let $\mu$ and $\nu$ be LF measures on $(\Omega,\mathcal{F})$ such that $\mu<<\nu$ and $S_{\mu}=S_{\nu}=\Omega$. If there exists a continuous version of $d\mu/d\nu$ on $\Omega$, it is unique.
\end{thm*}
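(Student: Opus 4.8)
The plan is to argue by contradiction on the set where two candidate continuous versions disagree, exploiting the fact that continuity turns this set into an open set and that full support forbids nonempty open $\nu$-null sets. First I would take two continuous versions $f$ and $g$ of $d\mu/d\nu$ on $\Omega$. Since both represent the same Radon-Nikod\'{y}m derivative, they belong to the same equivalence class and hence coincide $\nu$-almost everywhere; that is, the set $D=\{\omega\in\Omega:\ f(\omega)\neq g(\omega)\}$ satisfies $\nu(D)=0$.

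Next I would observe that $D$ is open. Indeed, $f-g$ is continuous on $\Omega$ as a difference of continuous functions, so $D=(f-g)^{-1}(\R\setminus\{0\})$ is the preimage of an open set under a continuous map, and is therefore open. The crucial step is then to combine this openness with the support hypothesis: suppose, for contradiction, that $D\neq\emptyset$ and pick $\omega_0\in D$. Then $D$ is a measurable open neighbourhood of $\omega_0$ with $\nu(D)=0$, so by the definition of support $\omega_0$ is an impossible point for $\nu$, i.e. $\omega_0\notin S_\nu$. This contradicts $S_\nu=\Omega$. Hence $D=\emptyset$, which means $f=g$ everywhere on $\Omega$ and establishes uniqueness.

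I do not expect any genuine obstacle here; the single subtle point is recognising that continuity upgrades ``equality $\nu$-a.e.'' to ``equality everywhere'' precisely because a nonempty open set cannot be $\nu$-null once $\nu$ has full support. It is worth noting that the argument uses only $S_\nu=\Omega$ together with the $\nu$-a.e. agreement of any two versions; the local finiteness of $\mu$ and $\nu$ and the hypothesis $S_\mu=\Omega$ are not strictly needed for the uniqueness conclusion, although they ensure the support notions invoked are well-behaved and that the existence of a continuous version is meaningful in the first place.
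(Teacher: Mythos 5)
Your proof is correct. There is, however, nothing in the paper to compare it against: this statement is quoted in Appendix A as a known result from \cite{Picc82}, with no proof given; the paper's own contribution is the variant Theorem \ref{uniqcv}, which is proved by restriction to $S_\mu$ and an appeal to precisely this cited theorem. Your argument is the natural way to fill that gap: two versions of $d\mu/d\nu$ agree $\nu$-a.e., the disagreement set $D=\{\omega:\ f(\omega)\neq g(\omega)\}$ is open by continuity of $f-g$, and a nonempty open $\nu$-null set would make each of its points impossible for $\nu$, contradicting $S_\nu=\Omega$. An alternative route, implicit in the paper's discussion of the limit in (\ref{ExLim}), is that a continuous version is pinned down pointwise as the limit of ratios $P_\theta(A_n)/\nu(A_n)$ over shrinking neighbourhoods, which yields uniqueness as a by-product; your topological argument is more elementary and self-contained.

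One caveat on your closing remark: local finiteness of $\nu$ is not as dispensable as you suggest. The $\nu$-a.e. agreement of two versions --- the input your argument takes for granted --- requires $\nu$ to be $\sigma$-finite (on an infinite atom, distinct constants can both serve as versions), and in this paper's framework $\sigma$-finiteness is exactly what local finiteness supplies (the first theorem of Appendix A). You are right that $S_\mu=\Omega$ and the local finiteness of $\mu$ play no role in the uniqueness conclusion itself.
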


The following variate of the previous theorem is of particular interest in the results presented in this paper.

\begin{teo}\label{uniqcv}
Let $\mu$ and $\nu$ be LF measures on $(\Omega,\mathcal{F})$ such that $\mu<<\nu$. If there exists a continuous version of $d\mu/d\nu$ on $S_{\mu}$, it is unique.
\end{teo}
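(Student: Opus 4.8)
The plan is to reduce the statement to the preceding (unnumbered) uniqueness theorem from \cite{Picc82}, which handles the special case $S_\mu=S_\nu=\Omega$, by passing to the subspace $S_\mu$ equipped with its relative topology and the trace $\sigma$-algebra $\mathcal{F}(S_\mu)$. On this space I would work with the restricted measures $\mu\big|_{S_\mu}$ and $\nu\big|_{S_\mu}$. The key point is that on $S_\mu$ both restricted measures have full support, so the special case applies and yields uniqueness there; since $S_\mu$ is precisely the set on which the continuous versions live, this is exactly the claim.

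First I would record the easy hypotheses. Since $\mu\ll\nu$ on $(\Omega,\mathcal{F})$, restriction gives $\mu\big|_{S_\mu}\ll\nu\big|_{S_\mu}$. Local finiteness is inherited: for $\omega\in S_\mu$ a neighbourhood $U_\omega$ in $\Omega$ with $\nu(U_\omega)<\infty$ yields the relative neighbourhood $U_\omega\cap S_\mu$ with $\nu\big|_{S_\mu}(U_\omega\cap S_\mu)\le\nu(U_\omega)<\infty$, and similarly for $\mu$; hence both restricted measures are LF on $S_\mu$. I would then check that restriction sends versions to versions: for measurable $B\subseteq S_\mu$,
\[
\mu\big|_{S_\mu}(B)=\mu(B)=\int_B\frac{d\mu}{d\nu}\,d\nu=\int_B\frac{d\mu}{d\nu}\,d\big(\nu\big|_{S_\mu}\big),
\]
so the restriction to $S_\mu$ of any version of $d\mu/d\nu$ is a version of $d(\mu\big|_{S_\mu})/d(\nu\big|_{S_\mu})$. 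In particular the given continuous version of $d\mu/d\nu$ on $S_\mu$ is, as a function on the space $S_\mu$, a continuous version of the restricted density.

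The main step — and the one requiring care — is to show that both restricted measures have full support in $S_\mu$, i.e. $S_{\mu|_{S_\mu}}=S_{\nu|_{S_\mu}}=S_\mu$. Fix $\omega\in S_\mu$ and a relative open neighbourhood $V\cap S_\mu$, with $V$ open in $\Omega$. Because the support of the LF measure $\mu$ carries full mass (the support theorem in Appendix A gives $\mu(S_\mu)=\mu(\Omega)$, hence $\mu(S_\mu^c)=0$), we have $\mu(V\cap S_\mu)=\mu(V)$; and since $\omega\in S_\mu$, every such $V$ satisfies $\mu(V)>0$. Thus $\mu\big|_{S_\mu}(V\cap S_\mu)=\mu(V)>0$, giving $S_{\mu|_{S_\mu}}=S_\mu$. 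For $\nu$ the same computation yields $\mu(V\cap S_\mu)>0$, and then $\mu\ll\nu$ forces $\nu(V\cap S_\mu)>0$, i.e. $\nu\big|_{S_\mu}(V\cap S_\mu)>0$; hence $S_{\nu|_{S_\mu}}=S_\mu$ as well. I expect this transfer of full support from $\mu$ to $\nu$ through absolute continuity (cf.\ the inclusion $S_\mu\subseteq S_\nu$ of Proposition \ref{TCVLF}) to be the crux, since without it the hypothesis $S_\nu=S_\mu$ of the special case could fail and the reduction would break down.

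Finally I would apply the preceding theorem from \cite{Picc82} on the ambient space $S_\mu$ to the measures $\mu\big|_{S_\mu}\ll\nu\big|_{S_\mu}$, which are LF with $S_{\mu|_{S_\mu}}=S_{\nu|_{S_\mu}}=S_\mu$ (the whole space). Since a continuous version of $d\mu/d\nu$ on $S_\mu$ is a continuous version of $d(\mu\big|_{S_\mu})/d(\nu\big|_{S_\mu})$ on all of $S_\mu$, that theorem guarantees it is unique, which is precisely the assertion. The only genuinely nontrivial content is the support transfer in the previous paragraph; everything else is routine bookkeeping about restricted measures.
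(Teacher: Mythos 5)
Your proposal is correct and follows essentially the same route as the paper: restrict $\mu$ and $\nu$ to $S_\mu$ and invoke the preceding uniqueness theorem of \cite{Picc82} on the subspace, using Proposition \ref{TCVLF} and the full-mass property of supports to see that both restricted measures have full support there. You merely spell out the support-transfer step (that $\nu\big|_{S_\mu}$ has support all of $S_\mu$, via $\mu(V\cap S_\mu)=\mu(V)>0$ and $\mu\ll\nu$) which the paper's one-line proof leaves implicit.
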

\begin{proof}
Simply use Proposition \ref{TCVLF}, consider the measures $\mu\big|_{S_{\mu}}$ and $\nu\big|_{S_{\mu}}$ and apply the previous theorem.
\end{proof}

We also consider the following auxiliary result, to be used in the proof of Lemma \ref{theo.2}.

\noindent{\bf Auxiliary result I.} Let $(\Omega,\mathcal{F},\nu)$ be a measure space and $f:\Omega:\longrightarrow\R$ be a real function in $M(\Omega,\mathcal{F})$. Let $A\in\mathcal{F}$ such that $\nu(A)>0$ and $f(\omega)>0$ for all $\omega\in A$. Then $\int_A fd\nu>0$.

\section*{Appendix B - Proofs}

\noindent\textit{Proof of Proposition \ref{propmdm}.} Since $\Upsilon\neq\emptyset$, there exists $\nu\in\Upsilon$ such that $\mathcal{P}<<\nu$. Then, it follows from Lemma 1 that there exists a measure $\lambda$ such that $\mathcal{P}<<\lambda$ and where $\lambda=\sum_{i=1}^{\infty} c_{i}P_{\theta_i}$, where the $c_{i}$'s are nonnegative constants with $\sum_{i=1}^{\infty} c_{i}=1$ and $P_{\theta_i}\in \mathcal{P}$. We now show that measure $\lambda$ is a minimal dominating measure w.r.t. $\mathcal{P}$, i.e. if $\nu\in\Upsilon$, then $\lambda<<\nu$. Take any $\nu\in\Upsilon$ and let $A\in\mathcal{F}$ such that $\nu(A)=0$. Then, $P_\theta(A)=0$ for all $\theta\in\Theta$ and, particularly, $P_{\theta_i}(A)=0$, for all $i\in\N$. Thus, $\lambda(A)=\sum_{i=1}^{\infty} c_{i}P_{\theta_i}(A)=0$.

\vspace{0.5cm}

\noindent\textit{Proof of Theorem \ref{main.theo}.} Let $\nu$ be a minimal dominating measure for $\mathcal{P}$ (its existence is guaranteed by Proposition \ref{propmdm}).
Now, take $h_1\in[\frac{d\nu}{d\nu_1}]_{\nu}$, $h_2\in[\frac{d\nu}{d\nu_2}]_{\nu}$ and, for each $\theta\in\Theta$, take $g_\theta\in[\frac{dP_\theta}{d\nu}]_{\nu}$. Define, for each $\theta\in\Theta$, $f_{1, \theta}(\omega)=g_\theta(\omega)h_1(\omega)$ and $f_{2, \theta}(\omega)=g_\theta(\omega)h_2(\omega)$. It follows that $f_{1,\theta}\in [\frac{dP_{\theta}}{d\nu_1}]_{\nu}$ and $f_{2,\theta}\in  [\frac{dP_{\theta}}{d\nu_2}]_{\nu}$.  Let
$$A=\{\omega\in\Omega;\ h_2(\omega)>0\}$$
so that $\nu(A^c)=0$ and consequently $P_\theta(A)=1$ for all $\theta\in\Theta$. Let $h$ be defined as
$$h(\omega)=\left\{\begin{array}{rc}
\frac{h_1(\omega)}{h_2(\omega)}, &\mbox{if}\quad \omega\in A,\\
0, &\mbox{if}\quad \omega\in A^c.
\end{array}\right.$$
Then, $h\in M(\Omega,\mathcal{F})$ and
$$f_{1,\theta}(\omega)=h(\omega)f_{2,\theta}(\omega),\ \forall\theta\in\Theta,\ \forall\omega\in A.$$

\vspace{0.5cm}

\noindent\textit{Proof of Proposition \ref{T1FT}.}

To prove i), for each $\theta\in\Theta$, take $g_{\theta}^*\in[\frac{dP_{\theta}}{dQ}]_{\nu}$ in $(\Omega,\sigma(T))$ and $h_1\in[\frac{dQ}{d\nu_1}]_{\nu}$ in $(\Omega,\mathcal{F})$. Then, there exists a $\mathcal{B}$-measurable function $g_\theta$ such that $g_{\theta}^*=g_{\theta}\circ T$ \citep[see][Section 1.4, Lemma 1.2]{Shao}. Now, since $T$ is a sufficient statistic for $\mathcal{P}$, it follows from that $g_{\theta}\circ T\in[\frac{dP_{\theta}}{dQ}]_{\nu}$ in $(\Omega, \mathcal{F})$ \citep[see][Section 2.6, Theorem 8]{TSH}. Define the function $f_{1,\theta}$ as
$$f_{1,\theta}(\omega)=g_{\theta}(T(\omega))h_{1}(\omega),\ \forall\omega\in\Omega.$$
Thus, it follows from the RN chain rule, that $f_{1,\theta}\in[\frac{dP_{\theta}}{d\nu_1}]_{\nu}$ for all $\theta\in\Theta$.\\
To prove ii), let $f_{1,\theta}(\omega)=g_{\theta}(T(\omega))h_{1}(\omega)$ and $f_{2,\theta}(\omega)=g_{\theta}(T(\omega))h_{2}(\omega)$ for all $\omega\in\Omega$ and $\theta\in\Theta$, where $h_2\in[\frac{dQ}{d\nu_2}]_{\nu}$. Let $A=\{\omega\in\Omega;\ h_1(\omega)>0\}$. Then, $\nu(A^c)=0$ and $f_{1,\theta}\propto f_{2,\theta}$ in $A$, for all $\theta\in\Theta$.

\vspace{0.5cm}

\noindent\textit{Proof of Lemma \ref{theo.2}.}
Part i. Let $\nu$ be a minimal dominating measure and $Q=\sum_ic_iP_{\theta_i}$ be the measure from Lemma \ref{lema1}. Define $A_i=\{\omega\in\Omega:\frac{dP_{\theta_i}}{d\nu}(\omega)>0\}$ and $A=\bigcup_iA_i$. Thus, $P_{\theta_i}(A_i)=1,\forall i\in\mathds{N}$, $P_{\theta_i}(A)=1,\forall i\in\mathds{N}$ and $Q(A)=1$. Notice that $Q(A^c)=0\Rightarrow P_{\theta}(A^c)=0,\forall \theta\in\Theta$ and, therefore, $P_{\theta}(A)=1,\forall\theta\in\Theta$.

Part ii. Let $B\subset A$ such that $\nu_2(B)=0$. Assume that $\nu_1(B)>0$. Since $B=\bigcup_i(A_i\bigcap B)$, there must exist $i_0\in\mathds{N}$ such that $\nu_1(A_{i_0}\bigcap B)>0$. By the auxiliary result I, as $\frac{dP_{\theta_{i_0}}}{d\nu}(\omega)>0$, for all $\omega\in A_{i_0}$, the latter inequality implies that
$$\ds \int_{A_{i_0}\bigcap B}\frac{dP_{\theta_{i_0}}}{d\nu}d\nu_1>0.$$
Now define $C:=\{\omega \in \Omega:\frac{d\nu}{d\nu_1}(\omega)>0\}$ and note that $\nu_1(A_{i_0}\bigcap B \bigcap C)>0$. Then, by the auxiliary result I,
$$\ds P_{\theta_{i_0}}(A_{i_0}\bigcap B)=\int_{A_{i_0}\bigcap B}\frac{dP_{\theta_{i_0}}}{d\nu}\frac{d\nu}{d\nu_1}d\nu_1=
\int_{A_{i_0}\bigcap B \bigcap C}\frac{dP_{\theta_{i_0}}}{d\nu}\frac{d\nu}{d\nu_1}d\nu_1>0,$$
which is a contradiction, since by $\nu_2(B)=0$ we should conclude that $P_{\theta_{i_0}}(A_{i_0}\bigcap B)=0$. Therefore, $\nu_1(B)$ must be zero.

\vspace{0.5cm}

\noindent\textit{Proof of Theorem \ref{CVT}.}

Let $\{P_{\theta_i}\}$ be a family of probability measures used in the construction of the measure $\nu$. Now, define measures $\dot{P}_{\theta}$, $\dot{\nu_1}$, $\dot{\nu_2}$ and $\dot{\nu}$ to be the restriction of the respective measures on $(A,\mathcal{F}(A))$, for all $\theta\in\Theta$.
For each $i\in\N$, consider the continuous derivatives $f_{1,\theta_i}\in [\frac{d\dot{P}_{\theta_i}}{d\dot{\nu}_1}]_{\dot{\nu}}$, $f_{2,\theta_i}\in [\frac{d\dot{P}_{\theta_i}}{d\dot{\nu}_2}]_{\dot{\nu}}$ and take any $h\in[\frac{d\dot{\nu}_2}{d\dot{\nu}_1}]_{\dot{\nu}}$.
For each $i\in\N$, define $A_i=\{\omega\in A;\ f_{1,\theta_i}(\omega)=h(\omega)f_{2,\theta_i}(\omega)\}$ and note that the RN chain rule implies that $\nu(A_i^c)=0$ for all $i\in\N$.
Now, let $B_i=\{\omega\in A;\ f_{2,\theta_i}(\omega)>0\}$, $B=\bigcup_{i=1}^\infty B_i$, $D_h=\bigcap_{i=1}^\infty A_i$ and $S_h=D_h\cap B$. It follows that $\nu(B)=1$, $\nu(S_h)=1$ and, consequently, $P_{\theta}(S_h)=1$, for all $\theta\in\Theta$. Furthermore, $h$ is continuous in the subspace $S_h$. To see that, let $\omega_0\in S_h$ and $\{\omega_n\}_{n=1}^{\infty}\subset S_h$ such that $\lim_n \omega_n=\omega_0$. It follows that $\omega_0\in D_h$ and there exists $i_0\in\N$ such that $\omega_0\in B_{i_0}$. This implies that
\begin{equation}\label{gcont1}
h(\omega_0)=\frac{f_{1,\theta_{i_0}}(\omega_0)}{f_{2,\theta_{i_0}}(\omega_0)}.
\end{equation}

\noindent Furthermore, since the function $f_{2,\theta_{i_0}}$ is continuous in $A$, $S_h\cap B_{i_0}$ is an open set in $S_h$. Thus, by the convergence of the sequence $\{\omega_n\}_{n=1}^{\infty}$, there exists $n_0\in\N$ such that, for $n\ge n_0$, $\omega_n\in S_h\cap B_{i_0}$ and
\begin{equation}\label{gcont2}
h(\omega_n)=\frac{f_{1,\theta_{i_0}}(\omega_n)}{f_{2,\theta{i_0}}(\omega_n)}.
\end{equation}

\noindent Finally, from (\ref{gcont1}) and (\ref{gcont2}) and the continuity of $f_{1,\theta_{i_0}}$ and $f_{2,\theta_{i_0}}$, it follows that
$$\lim_n h(\omega_n)=h(\omega_0),$$
\noindent which establishes the continuity of $h$ in $S_h$.

\noindent Now, for each $\theta\in\Theta$, define the following set
$$B_\theta=\{\omega\in S_h;\ f_{1,\theta}(\omega)=h(\omega)f_{2,\theta}(\omega)\}.$$
It follows, by the RN chain rule, that $\nu(B_\theta^c\bigcap S_h)=0$, for all $\theta\in\Theta$. Since the function $(f_{1,\theta}-hf_{2,\theta})$ is continuous on $S_h$, we have that $B_\theta$ is a closed set in $S_h$ for each $\theta\in\Theta$ and, consequently, $B_h=\bigcap_{\theta\in\Theta} B_\theta$ is also a closed set in $S_h$. Since $S_h$ is a subspace of a metric separable space, $S_h$ is also a metric separable space. This implies that there exists a sequence $\{\theta_j\}\subset\Theta$ such that $B_h=\bigcap_{j=1}^\infty B_{\theta_j}$. Moreover, since $\nu(B_\theta^c\bigcap S_h)=0,\;\forall\theta\in\Theta$, it follows that $\nu_1(B_h^c\bigcap S_h)=0$ which, in turn, implies that $P_\theta(B_h)=1$ for each $\theta\in\Theta$, and
$$f_{1,\theta}(\omega)=h(\omega)f_{2,\theta}(\omega),\ \forall\theta\in\Theta,\ \forall\omega\in B_h.$$

\vspace{0.5cm}

\noindent\textit{Proof of Corollary \ref{CVTcor1}.}
Since $f_{1,\theta}$ and $f_{2,\theta}$ are strictly positive in $\Omega$, for all $\theta\in\Theta$, it follows that all the $P_{\theta}$'s, $\nu_1$ and $\nu_2$ are equivalent and, by Proposition \ref{TCVLF}, $S_{\theta}=S_{\nu_2}=S_{\nu_1}=\Omega$, for all $\theta\in\Theta$. For each $\theta\in\Theta$, define $h_{\theta}(\omega)=\frac{f_{1,\theta}(\omega)}{f_{2,\theta}(\omega)}$, for all $\omega\in\Omega$, and note that, for all $\theta\in\Theta$, $h_{\theta}\in[\frac{d\nu_2}{d\nu_1}]_{\nu}$ and $h_{\theta}$ is continuous in $\Omega$. Since, $\nu_1$ and $\nu_2$ are LF measures, Theorem \ref{uniqcv} guarantees that all the $h_{\theta}$'s coincide in $\Omega$, i.e. $h_{\theta}=h$, for all $\theta\in\Theta$. The result follows from the fact that
$f_{1,\theta}(\omega)=h(\omega)f_{2,\theta}(\omega)$, for all $\omega\in\Omega$ and for all $\theta\in\Theta$.

\vspace{0.5cm}

\noindent\textit{Proof of Proposition \ref{PMDM1}.}
Let $\mu$ be a $\sigma$-finite measure such that $\mathcal{P}<<\mu$. Let $g_\theta(x)=\frac{dP_\theta}{d\mu}(x)$ and define
$$m^*(x)=\int_\Theta g_\theta(x)dR.$$
\noindent Now consider the predictive measure $\xi$ obtained from $m^*$, i.e.,
$$\xi(A)=\int_A m^* d\mu,\ \forall A\in\mathcal{B}_\mathcal{X}.$$
\noindent We claim that $\lambda=\xi$. For any $A\in\mathcal{B}_\mathcal{X}$,
\begin{eqnarray*}
\lambda(A)&=&\int_A m d\nu=\int_A\int_\Theta f_\theta(x)dR d\nu
\overset{(i)}{=}\int_\Theta\int_A f_\theta(x)d\nu dR=\int_\Theta P_\theta(A)dR\\
&=&\int_\Theta\int_A g_\theta(x)d\mu dR\overset{(ii)}{=}\int_A\int_\Theta g_\theta(x)dR d\mu
=\int_A m^* d\mu=\xi(A),
\end{eqnarray*}
\noindent where the equalities $(i)$ and $(ii)$ follow from Fubini's theorem.

\vspace{0.5cm}

\noindent\textit{Proof of Theorem \ref{PMDM2}.}
If $\lambda$ dominates $P_\theta$, the result follows immediately since $\lambda(N)=0$. Suppose now that $P_\theta(N)=0$ and take $A\in\mathcal{B}_\mathcal{X}$ such that $\lambda(A)=0$. We have to show that $P_\theta(A)=0$. Note that
\begin{equation}\label{eq.m}
0=\lambda(A)=\lambda(A\cap N^c)=\int_{A\cap N^c} md\nu.
\end{equation}
\noindent Then, since $m$ is strictly positive in $A\cap N^c$, equation \eqref{eq.m} is true only if $\nu(A\cap N^c)=0$. Hence, $P_\theta(A\cap N^c)=0$. But, by hypothesis, $P_\theta(A)=P_\theta(A\cap N^c)$ and the result follows.

\vspace{0.5cm}

\noindent\textit{Proof of Theorem \ref{PMDM3}.}
\noindent Let $M=M_\theta$ for all $\theta\in\Theta$ and let $A\in\mathcal{B}_\mathcal{X}$ such that $\lambda(A)=0$. To show that $P_\theta(A)$ for all $\theta\in\Theta$ is sufficient to show that $\nu(A\cap M)=0$, since $\mathcal{P}<<\nu$ and $P_\theta(A)=P_\theta(A\cap M)$ for all $\theta\in\Theta$. Suppose that $\nu(A\cap M)>0$. Hence, since $f_\theta$ is strictly positive on $A\cap M$,
\begin{equation}\label{eq.suporte}
P_\theta(A)=P_\theta(A\cap S)=\int_{A\cap S} f_\theta d\nu>0,\ \forall\theta\in\Theta.
\end{equation}
\noindent On the other hand,
\begin{equation}\label{eq.lambda}
\lambda(A)=\int_A m d\nu=\int_A\int_\Theta f_\theta dR d\nu=\int_\Theta P_\theta(A) dR=\int_\Theta P_\theta(A\cap S) dR,
\end{equation}
\noindent where the penultimate equation follows from Fubini's theorem. Then, since $R(\Theta)>0$, it follows from $\eqref{eq.suporte}$ and $\eqref{eq.lambda}$ that $\lambda(A)>0$, contradicting the assumption that $\lambda(A)=0$. So, $\nu(A\cap M)=0$ and the proof is complete.

\vspace{0.5cm}

\noindent\textit{Proof of Theorem \ref{teo.exp}.}
Suppose that $\frac{dP_\theta}{d\nu}$ is given by \eqref{eq.exp}. Consider the measure $Q$ given by Lemma \ref{lema1} and let $q\in [\frac{dQ}{d\nu}]$. Remember that $Q$ is minimal and so $Q<<\nu$. Without loss of generality we may assume that $q>0$. Define, for each $\theta\in\Theta$, the following function:
\begin{equation}\label{function.b}
b_{\theta}(\omega)=exp\{[\eta(\theta)]^{\tau}T(\omega)-\xi(\theta)\}m(\omega),\ \omega\in\Omega,
\end{equation}
\noindent where $m=h_\nu/q$. On the other hand, by RN chain rule, it follows that
\begin{equation}\label{function.rn.q}
   exp\{[\eta(\theta)]^{\tau}T(\omega)-\xi(\theta)\}h_{\nu}(\omega)=\frac{dP_\theta}{dQ}(\omega)q(\omega),\ \nu-a.e.
\end{equation}
\noindent Consequently, from \eqref{function.b} and \eqref{function.rn.q}, $b_{\theta}=dP_{\theta}/dQ$ $\nu$-almost-everywhere. Hence, $b_\theta\in [\frac{dP_\theta}{dQ}]$.
Now, let $\mu$ be a $\sigma$-finite measure such that $\mathcal{P}<<\mu$ and let $\mu\neq\nu$. Again, by the minimality of $Q$, $Q<<\mu$. Let $s\in [\frac{dQ}{d\mu}]$ and define, for each $\theta\in\Theta$,
\begin{equation}\label{eq.exp.2}
p_\theta(\omega)=exp\{[\eta(\theta)]^{\tau}T(\omega)-\xi(\theta)\}h_{\mu}(\omega),\ \omega\in\Omega,
\end{equation}
\noindent where $h_{\mu}=ms$. Hence, by RN chain rule, $p_{\theta}\in [\frac{dP_\theta}{d\mu}]$ and the proof is complete.

\end{document}